\newtheorem{theo}{Theorem}[section]
\newtheorem{prop}{Proposition}[section]
\newtheorem{lem}{Lemma}[section]
\newtheorem{cor}{Corollary}[section]
\newtheorem{DEF}{Definition}[section]
\newtheorem{EX}{Example}[section]
\newtheorem{REM}{Remark}[section]
\newenvironment{theorem}{\begin{theo}}{\end{theo}}  
\newenvironment{proposition}{\begin{prop}}{\end{prop}}  
\newenvironment{lemma}{\begin{lem}}{\end{lem}}
\newenvironment{corollary}{\begin{cor}}{\end{cor}}
\newenvironment{definition}{\begin{DEF}\rm}{\end{DEF}} 
\newenvironment{example}{\begin{EX}\rm}{\end{EX}} 
\newenvironment{remark}{\begin{REM}\rm}{\end{REM}}
\newenvironment{proof}{\noindent \texttt{P\,r\,o\,o\,f.}}{\hfill $\square$ \par \ } 
\newcommand{\R}{\mathbb R}
\newcommand{\N}{\mathbb N}
\newcommand{\B}{\mathbb B}
\newcommand{\X}{\mathbb X}
\newcommand{\Y}{\mathbb Y}
\newcommand{\dom}{{\rm dom}\, }
\newcommand{\grph}{{\rm gph}\,}
\newcommand{\lev}{{\rm lev}\, }
\newcommand{\nullv}{\mathbf{0}}
\newcommand{\inte}{{\rm int}\, }
\newcommand{\valf}{{\rm val}_\mathcal{P}}
\newcommand{\locvalf}[1]{{\rm locval}_{\mathcal{P},\, #1}}
\newcommand{\Argmin}{{\rm Argmin}_\mathcal{P}}
\newcommand{\calm}{{\rm clm}\, }
\newcommand{\dist}{{\rm dist} }
\newcommand{\ball}{{\rm B}}
\newcommand{\uball}{{\B}}
\newcommand{\stsl}[1]{|\nabla #1|}
\newcommand{\sostsl}[1]{\overline{|\nabla #1|}{}^>}
\newcommand{\stslx}[1]{|\nabla_x #1|}
\newcommand{\sostslx}[1]{\overline{|\nabla_x #1|}{}^>}
\newcommand{\fsubdif}{\widehat{\partial}}
\newcommand{\fesubdif}{\widehat{\partial_\epsilon}}
\newcommand{\soesubdif}{\fesubdif^>}
\newcommand{\soesubdifsl}[1]{\overline{|\partial #1|}{}^>}
\newcommand{\FrDer}{\widehat{\rm D}}
\newcommand{\StDer}{\overline{\rm D}}
\newcommand{\GDer}{{\rm D}}
\newcommand{\Coder}{\widehat{\rm D}^*}
\newcommand{\Normal}{\widehat{\rm N}}
\newcommand{\disp}{d[f(\bar p,\cdot),F(\bar p,\cdot)]}
\newcommand{\pdisp}{d[f,F]}
\newcommand{\dispnull}{d[\nullv,F(\bar p,\cdot)]}
\title{\bf On Lipschitz semicontinuity properties of variational
systems with application to parametric optimization}
\author{{\sc A. Uderzo}\footnote{Dept. of Statistics, University of
Milano-Bicocca, Via Bicocca degli Arcimboldi, 8 - 20126 Milano, Italy,
e-mail address: {\it amos.uderzo@unimib.it} }
}
\begin{document}

\maketitle

\vskip3cm

\begin{abstract}
In this paper two properties of recognized interest in variational analysis,
known as Lipschitz lower semicontinuity and calmness, are studied
with reference to a general class of variational systems, i.e.
to solution mappings to parameterized generalized equations. 
In consideration of the metric nature of such properties, some related
sufficient conditions
are established, which are expressed via nondegeneracy conditions
on derivative-like objects appropriate for a metric space analysis.
For certain classes of generalized equations in Asplund spaces,
it is shown how such conditions can be formulated by using
the Fr\'echet coderivative of the field and the derivative of the
base. Applications to the stability analysis of parametric constrained
optimization problems are proposed.
\end{abstract} 

\vskip3cm

\noindent{\bf Mathematics Subject Classification (2010):}
Primary:\ 49J53; Secondary:\ 49K40, 49J52, 90C31.

\vskip.5cm

\noindent{\bf Key words:} generalized equation, implicit multifunction,
Lipschitz lower semicontinuity, calmness, parametric constrained
optimization, strict outer slope, Fr\'echet subdifferential and coderivative.

\vfill


\section*{0\quad Introduction}

When looking at several areas of pure and applied mathematics,
the same dichotomy appears in a variety of
situations: on one hand, for treating a formalized problem as well
as for understanding modelized phenomena, one reduces to solve
various kinds of equations; on the other hand, one experiences 
very often hard difficulties in finding out an explicit solution,
if any, for sufficiently general classes of equations.
Such dichotomy gains more evidence when parameters,
alongside unknowns, enter the equations under examination.
The presence of the former ones nonetheless is essential, in as
much as it allows to describe effects of errors and/or inaccuracies
frequently arising in real-world measurements and
transmissions of data. Moreover
parameters enable to implement perturbation methods of analysis,
that sometimes can afford interesting theoretical insights into the
original problem. It is clear that, in the presence of parameters,
the solution set associated with an equation becomes a (generally)
set-valued mapping, which is expected to be only implicitly defined,
due to the aforementioned computational difficulties. In such case,
even the solvability of an equation comes to depend on parameters.
As a result of such irreducible dichotomy, one is forced to study
indirectly features and behaviour of the solution mapping, whose
analytic form remains hidden, by performing proper inspections of the given
equation data. Roughly speaking, this is the spirit essentially
shared by many implicit function and multifunction theorems
recently established in different areas of analysis (see
\cite{BorZhu05,ChKrYa11,DurStr12,KlaKum02,LeTaYe08,Mord06,Schi07,Uder09}, just
to mention those works cited in the present paper for other purposes).
The case of parameterized generalized equations\footnote{The terminology
``generalized equations", as well as the problem itself they formalize, were
introduced by S.M. Robinson in \cite{Robi79}.} in variational analysis
makes no exception.

Given a set-valued mapping $F:P\times X\longrightarrow 2^Y$
and a functon $f:P\times X\longrightarrow Y$, by {\it parameterized
generalized equation} the following problem is meant:
$$
    \hbox{find $x\in X$ such that}\ f(p,x)\in F(p,x). \leqno (\mathcal{GE}_p)
$$
According to the nowadays vast literature devoted to such subject,
parameterized generalized equations are formalized in several different fashions.
For the purposes of the current research work, the form
resulting from $(\mathcal{GE}_p)$, adopted also in \cite{LevRoc94,
Uder09}, seems to be the appropriate one.
The problem data $f$ and $F$ are sometimes referred to as the
{\it base} and the {\it field} of $(\mathcal{GE}_p)$, respectively. 
In $(\mathcal{GE}_p)$ the variable denoted by $x$ plays the role
of problem unknown (state), whereas $p$ indicates a varying
parameter.
The related solution mapping $G:P\longrightarrow 2^X$, implicitly
defined by $(\mathcal{GE}_p)$, namely the (generally) set-valued
mapping
$$
    G(p)=\{x\in X:\ f(p,x)\in F(p,x)\},
$$
is often referred to as the {\it variational system} associated with
$(\mathcal{GE}_p)$. Notice that in the formulation considered above
both the base and the field do depend on the parameter.

Correspondingly with the specialized forms taken by the base and
the field of $(\mathcal{GE}_p)$, variational systems appear and
show their relevance in a wide variety of contexts from mathematical
programming, variational analysis, equilibrium and control theories.
More precisely, they may formalize parametric constraint systems
or the set of local/global solutions in parametric optimization problems. In
adequately structured settings, variational conditions in the
Robinson's sense\footnote{The term {\it variational condition}
for generalized equations can be traced back to
\cite{RocWet98}. In fact, it already appears in \cite{LevRoc96}.
S.M. Robinson adopted such term, thereby contributing to popularize it.}, that is
abstract problems able to include optimality conditions, variational
inequalities and complementarity systems, are other examples of
variational systems. Furthermore, even problems seemingly
having not an extremal nature, such as the search for fixed
points and equilibria, can be reduced to variational systems.
The reader is referred to \cite{Mord06b} for a rich account on
possible applications of such formalism.

The aim of the present paper, in accordance with the spirit of
implicit multifunction theorems, is to contribute to the study
of certain properties of variational systems associated with
$(\mathcal{GE}_p)$. Within the huge scientific production devoted
to this theme, a major research line seems to concentrate on
the analysis of such properties as Aubin continuity and metric
regularity, the latter being related to the former through the
inverse mapping (see, for instance \cite{AraMor10,AraMor11,Aubi84,
DurStr12,GeMoNa09,LeTaYe08,Mord08,MorNam09,NgTrTh12,Uder09}).
Indeed, there exist many important findings
about them and the achievements accumulated on the subject
during several decades allow to draw now a clear and comprehensive
theoretical picture.
Nonetheless, there are other forms of Lipschitz behaviour,
whose study seems to be justified by not less strong
motivations. Among them, Lipschitz lower semicontinuity and calmness
are here considered.

Lipschitz lower semicontinuity describes in quantitative terms, by
means of a Lipschitzian type metric estimation, the lower semicontinuity
behaviour of a given multifunction at a reference point. As such,
it requires on both the domain and the range space a metric structure.
If applied to variational systems, such property not only ensures
local solvability of the corresponding parameterized generalized
equations, but provides also estimates of distances of their values
from a certain solution, pertaining to a reference value of the
parameter. In the more particular context of parametric constrained
optimization, the validity of Lipschitz lower semicontinuity property
of the feasible region mapping yields a calmness from above behaviour
of the value function (see Section \ref{subsec:Lippromultfun}).
Apart from being interesting in itself, Lipschitz lower semicontinuity
may be exploited to characterize other stability properties of
multifunctions. For instance, its occurrence at all points of the
graph of a set-valued mapping, with fixed related constants, near
a given point is known to be equivalent to the Aubin continuity at that
point (see \cite{KlaKum02}). Again, a multifunction is Lipschitz
lower semicontinuous at a given point and calm at all points of
its graph near such point iff it satisfies the Aubin property ibidem
(see \cite{KlaKum02}). The latter characterization demonstrates
also a possible employment of calmness, thereby introducing the
second property under study.

Calmness is a property that relaxes at the same time the requirements
of Aubin continuity and those of upper Lipschitz semicontinuity.
Considered by several authors in different contexts and under
various names (see \cite{Burk91,Clar76,RocWet98,YeZhu95}),
it turned out to capture a crucial behaviour of multifunctions.
It has been established to be equivalent, via the inverse mapping,
to metric subregularity, a weaker ``one-point" variant of metric
regularity. As such, it was successfully introduced already in
\cite{Ioff79} for establishing optimality conditions. In more
recent times, since the failure of metric regularity has been proved
by B.S. Mordukhovich for major classes of variational systems
(see \cite{GeMoNa09,Mord08}),
calmness is receiving an increasing interest. On the other hand, in the
particular context of constraint systems analysis, it has been
widely employed as a key condition to guarantee the occurence
of highly desirable phenomena: existence of local error bounds
for the relations formalizing the constraints, viability of penalty
function methods, validity of constraint qualifications (see
\cite{HenOut05,KlaKum02}). More precisely, the interplay of
calmness and the Abadie constraint qualification in relation to
Karush-Kuhn-Tucker points for mathematical programming problems
has been well understood since many years. Furthermore, in
\cite{IofOut08} it has been shown how calmness/metric subregularity
can be utilized to carry out a deep analysis of constraint qualifications
for the basic rules of nonsmooth subdifferential calculus.
Again, calmness plays
a primary role in characterizing weak sharpness of local minimizers
in optimization (see \cite{StuWar99}).

Motivated by the aforementioned theoretical references and applicative
instances, the study reported in the present paper deals mainly with
conditions for detecting Lipschitz lower semicontinuity and calmness
of variational systems associated with $(\mathcal{GE}_p)$. Roughly
speaking, in both the cases a basic sufficient condition is obtained according
to the scheme summarized below:
\vskip.5cm
$$
    \begin{tabular}{|c|}
       \hline
      calmness of  \\
       the base $f$ \\
         w.r.t. \\
        the parameter $p$ \\
        \hline
    \end{tabular}
\quad
\begin{tabular}{|c|}
       \hline
       at a given  \\
       point $\bar x$ \\
        \hline \hline
        unformly in $x$ \\
           near $\bar x$ \\
        \hline
    \end{tabular}
\quad
   \wedge
\quad
\begin{tabular}{|c|}
       \hline
      Lipschitz lower  \\
       semicontinuity \\
       of the field $F$ \\
        \hline \hline
        upper Lipschitz \\
        continuity \\
         of the field $F$ \\
        \hline
\end{tabular}
\quad
   \wedge
\quad
\begin{tabular}{|c|}
       \hline
        appropriate  \\
       nondegeneracy \\
        condition \\
        \hline
\end{tabular}
    \quad
\begin{tabular}{c}
       $\Longrightarrow$ \\
                        \\
        $\Longrightarrow$ \\
\end{tabular}
   \quad
\begin{tabular}{|c|}
       \hline
      Lipschitz lower  \\
         semicontinuity \\
       of $G$ at $(\bar p,\bar x)$ \\
        \hline \hline
          calmness \\
         of \\ $G$ at $(\bar p,\bar x)$ \\
        \hline
\end{tabular}
$$
\vskip.5cm
Since the natural environment where to conduct the analysis of
both such properties are metric spaces, the appropriate nondegeneracy
conditions appearing in the above scheme will be expressed in terms of
positivity of certain derivative-like objects, whose relevance
in variational analysis has been exemplarily illustrated in
\cite{FaHeKrOu10}.
Criteria for Lipschitz lower semicontinuity and calmness properties
of set-valued mappings have been studied by many authors (see,
among the others, \cite{IofOut08,KlaKum02}). For special classes
of generalized equations the main efforts have been concentrated
on calmness (see \cite{AraMor10,ZheNg07,ZheNg10}). To the author's
knowledge, a study considering both such properties in the case
of variational systems defined by $(\mathcal{GE}_p)$ has not been
undertaken yet.

The material proposed in the paper is arranged in four sections,
whose contents are briefly outlined below.
In Section \ref{sec:varanapre} needed preliminaries from several
topics of variational analysis are presented. The Lipschitzian type
properties under examination are precisely stated and useful
connections with other stability properties of multifunctions are
recalled. In particular, in view of subsequent applications, some
consequences of such kind of properties on the value function
associated with a family of parametric constrained optimization
problems are discussed. The second part of this section is devoted
to review those tools and constructions of generalized differentiation,
which are involved in the regularity conditions of the main conditions. In Section 
\ref{sec:Liplsccriter} a sufficient condition for the Lipschitz lower semicontinuity
of variational systems is established in a purely metric setting.
In Section \ref{sec:calmcriter} a similar condition for calmness of
variational systems, complemented with an estimation of the
related calmness modulus, is achieved. Such result is then compared
with a quite close very recent calmness condition, which considers
the special case of parameterized generalized equations in Asplund
spaces, having constant (null) base. An analogous condition for
generalized equations with smooth base is also provided.
In Section \ref{sec:applparoptim}
the result formulated in Section \ref{sec:Liplsccriter} is applied
to the study of parametric constrained optimization problems.
More precisely, conditions able to guarantee the Lipschitz lower
semicontinuity of the solution mapping are provided in different
settings.

The notation employed throughout the paper is consistent with
that used in \cite{Mord06,Mord06b}
and in a large part of the variational analysis literature. For the reader's
convenience a list is provided below. Symbol $\R$ stands for
the real number field. $(X,d)$ denotes a metric space.
In such setting, the distance of an element $x$ from a set $S$ is
denoted by $\dist(x,S)$, $\ball (x,r)=\{z\in X: d(x,z)\le r\}$
stands for the closed ball with center at $x$ and radius $r$,
and $\ball (S,r)=\{z\in X:\ \dist(z,S)\le r\}$ for the $r$-enlargement
of $S\subseteq X$, provided that $r\ge 0$.
Given a subset $S$, its indicator function evaluated at $x$ is
denoted by $\iota(x,S)$.  By $\inte S$ the topological interior
of $S$ is denoted. Given a function $\phi:X\longrightarrow
\R\cup\{\pm\infty\}$, $\dom\phi$ denotes the domain of $\phi$
and, given $s\in\R$, $\lev_{{}_{>s}}\phi=\{x\in X:\ \phi(x)>s\}$
the $s$-super level set of $\phi$.
Given  a set-valued mapping (multifunction) $\Phi:X\longrightarrow
2^Y$, $\dom\Phi=\{x\in X:\ \Phi(x)\ne\varnothing\}$ and $\grph\Phi=
\{(x,y)\in X\times Y:\ y\in\Phi(x)\}$ indicate the domain and the graph
of $\Phi$, respectively. Unless otherwise stated, all multifunctions
will be assumed to take closed values.
Whenever $(\X,\|\cdot\|)$ is a Banach space, $\X^*$ will denote
its topological dual, with $\X$ and $\X^*$ being paired in duality by
the bilinear form $\langle\cdot\, ,\cdot\rangle:\X^*\times\X\longrightarrow
\R$. The null element of a vector space is marked by $\nullv$, whereas
the null element of its dual by $\nullv^*$. The unit ball centered
at $\nullv^*$ is denoted by $\B^*$. Whenever $\Lambda:\X
\longrightarrow\Y$ indicates a linear bounded operator between
Banach spaces, $\Lambda^*$ denotes its adjoint operator and
$\|\Lambda\|_\mathcal{L}$ its operator norm. The acronym l.s.c.
will be used for short in place of lower semicontinuous.


\section{Variational analysis preliminaries}    \label{sec:varanapre}


\subsection{Lipschitzian type properties of multifunctions}    \label{subsec:Lippromultfun}

Let us start with recalling the basic properties that are the main
theme of the paper.

\begin{definition}     \label{def:Liplscset}
A set-valued mapping $\Phi:P\longrightarrow 2^X$ between
metric spaces is said to be {\it Lipschitz lower semicontinuous} at
$(\bar p,\bar x)\in\grph\Phi$ if  there exist positive real constants
$\zeta$ and $l$ such that
$$
    \Phi(p)\cap \ball (\bar x,\l\, d(p,\bar p))\ne\varnothing,
    \quad\forall p\in \ball(\bar p,\zeta).
$$
\end{definition}

The following example demonstrates a situation in which, while
lower semicontinuity is in force, Lipschitz lower semicontinuity
fails to hold. It is aimed at clarifying the difference between the
quantitative notion of lower semicontinuity appearing in Definition
\ref{def:Liplscset} and the merely topological one.

\begin{example}
Let $P=X=\R$ be endowed with its usual Euclidean metric structure.
Consider the epigraphical set-valued mapping $\Phi:\R\longrightarrow
2^\R$ associated with the profile $\varphi(p)=\sqrt{|p|}$, that is
$$
    \Phi(p)=\{x\in\R:\ x\ge\sqrt{|p|}\},
$$
and the points $\bar p=\bar x=0$. Mapping $\Phi$ is clearly lower
semicontinuous at $(0,0)$: for any $\epsilon>0$, by taking $\delta_\epsilon
\in (0,\epsilon^2)$ one has
$$
    \Phi(p)\cap\ball(0,\epsilon)\ne\varnothing,\quad\forall p\in
    \ball(0,\delta_\epsilon).
$$
Nonetheless, $\Phi$ is not Lipschitz lower semicontinuous at
$(0,0)$. Indeed, for every $l>0$ and $\zeta>0$ one can find
$p\in\ball(0,\zeta)$ such that $\sqrt{|p|}>l|p|$, and therefore
for such value of $p$ it results in
$$
   \Phi(p)\cap\ball(0,l|p|)=\varnothing.
$$
\end{example}

Another notion related to stability properties of multifunctions is provided by
the next definition.

\begin{definition}    \label{def:calmset}
A set-valued mapping $\Phi:P\longrightarrow 2^X$ between
metric spaces is said to be {\it calm} at $(\bar p,\bar x)\in
\grph\Phi$ if  there exist positive real
constants $\delta$, $\zeta$ and $\ell$ such that
\begin{eqnarray}       \label{inc:calmset}
    \Phi(p)\cap\ball(\bar x,\delta)\subseteq \ball(\Phi(\bar p),\ell\,
     d(p,\bar p)),\quad\forall p\in \ball(\bar p,\zeta),
\end{eqnarray}
or, equivalently,
$$
   \dist(x,\Phi(\bar p))\le\ell d(p,\bar p),\quad\forall
   x\in\Phi(p)\cap \ball(\bar x,\delta),\ 
   \forall p\in \ball(\bar p,\zeta).
$$
Any such constant as $\ell$ is called {\it calmness constant}
for $\Phi$ at $(\bar p,\bar x)$. The value
$$
    \calm\Phi(\bar p,\bar x)=\inf\{\ell>0:\ \exists\delta,\, \zeta>0
     \hbox{ for which (\ref{inc:calmset}) holds}\}
$$
is called {\it calmness modulus} of $\Phi$ at $(\bar p,\bar x)$.
\end{definition}

Calmness has strict connections with other well-known  stability properties
for multifunctions. One one hand, it can be regarded as an ``image
localization" of the notion of upper Lipschitz continuity introduced
by S.M. Robinson (see \cite{Robi79,Robi81}).
Let us recall that a set-valued mapping  $\Phi:P\longrightarrow 2^X$
between metric spaces is called {\it upper Lipschitz} at
$\bar p\in\dom\Phi$ if there exist positive constants
$\zeta$ and $\ell$ such that
\begin{eqnarray}       \label{inc:uLipset}
    \Phi(p)\subseteq \ball(\Phi(\bar p),\ell\,
     d(p,\bar p)),\quad\forall p\in \ball(\bar p,\zeta).
\end{eqnarray}
Clearly, inclusion (\ref{inc:uLipset}) makes (\ref{inc:calmset})
satisfied with any positive $\delta$.
Nonetheless, note that, whereas upper Lipschitz continuity entails
upper semicontinuity of a set-valued mapping, calmness does not.

On the other hand, calmness can be regarded as a ``one-point"
version of the {\it Aubin continuity}, known also under the name of
{\it pseudo-Lipschitz continuity} (or {\it Lipschitz-likeness}),
which postulates the existence of positive reals $\delta$, $\zeta$
and $\ell$ such that
$$
     \Phi(p)\cap\ball(\bar x,\delta)\subseteq \ball(\Phi(p'),\ell\,
     d(p,p')),\quad\forall p,\, p'\in \ball(\bar p,\zeta).
$$
\begin{example}
Let $P=X=\R$ be endowed with its usual metric structure. Consider
the set-valued mapping $\Phi:\R\longrightarrow 2^\R$ defined by
$$
     \Phi(p)=\left\{ \begin{array}{ll}
       [0,+\infty), & \hbox{if } p=0, \\
       (-\infty,-1]\cup\{0\}, & \hbox{otherwise.}
     \end{array}\right.
$$
This mapping is calm at $(\bar p,\bar x)=(0,0)$, because
for any $\ell\ge 0$ it holds
$$
   \Phi(p)\cap\ball(0,\delta)=\{0\}\subseteq\ball([0,+\infty),\ell
    |p|)= [-\ell|p|,+\infty),\quad\forall p\in\R,
$$
provided that $\delta<1$. Notice that $\calm\Phi(0,0)=0$.
Mapping $\Phi$ fails clearly to be upper Lipschitz at $0$.
The reader should notice as well that $\Phi$ fails also to be
Aubin continuous at $(0,0)$, because for every $\ell\ge 0$
and positive $\delta$ and $\zeta$, there is $p\in\ball(0,\zeta)$
such that
$$
     [0,\delta]\not\subseteq  [0,\ell|p|].
$$
\end{example}

\begin{remark}     \label{rem:Liplsccalmgen}
(i) Note that Definition \ref{def:calmset} implies that $\Phi
(\bar p)\ne\varnothing$, while set $\Phi(p)\cap \ball(\bar x,\delta)$
may happen to be empty even for $p$ near $\bar p$.
Definition \ref{def:Liplscset} entails instead the nonemptiness
of the values taken by $\Phi$ near $\bar p$.

(ii) Whenever $\Phi$ is a single-valued mapping, inclusion
(\ref{inc:calmset}) would take the form
$$
    d(\Phi(p),\Phi(\bar p))\le\ell d(p,\bar p),\quad\forall p\in
   \ball(\bar p,\zeta) \hbox{ such that } 
   \Phi(p)\in \ball(\Phi(\bar p),\delta),
$$
so, like in the set-valued case, calmness as resulting from
Definition \ref{def:calmset} does not imply continuity.
Nevertheless, in the most part of variational analysis
literature (see, for instance, \cite{Levy06,RocWet98}), the limitation
$\Phi(p)\in \ball(\Phi(\bar p),\delta)$
is dropped out in the case of functions. It is clear that, with
this variant, calmness for single valued-mappings yields
continuity and, actually, it becomes equivalent to upper
Lipschitz continuity.

(iii) Through simple examples, it is readily shown that Lipschitz
lower semicontinuity and calmness are properties independent 
of one another.
\end{remark}

For scalar functions (i.e. when $X=\R\cup\{\pm
\infty\}$), the notion of calmness has been conveniently splitted
into two distinct ones. 
A function $\phi:P\longrightarrow\R\cup\{\pm\infty\}$
is said to be {\it calm from below} at $\bar p\in P$ if $\bar p\in
\dom\phi$ and it holds
$$
    \liminf_{p\to\bar p}\frac{\phi(p)-\phi(\bar p)}{d(p,\bar p)}
    >-\infty.
$$
By replacing $\displaystyle\liminf_{p\to\bar p}$ and $-\infty$
with $\displaystyle \limsup_{p\to\bar p}$ and $+\infty$, respectively,
and by reversing the above inequality, one obtains the calmness from
above of $\phi$ at $\bar p$. Of course, $\phi$ is calm at $\bar p$ iff
it is both calm from above and from below at the same point.

So far the notion of calmness has been referred to mappings and
functionals. The next definition enlightens the variational nature
of calmness by referring it to perturbed optimization problems.

Let $\varphi:P\times X\longrightarrow\R$ and $h:P\times X
\longrightarrow Y$ be given functions and let $C$ be a nonempty
closed subset ot $Y$. The basic format of the parametric optimization
problems considered in what follows is defined as below:
$$
    \min_{x\in X}\varphi(p,x)  \ \hbox{ subject to } h(p,x)\in C  
    \leqno (\mathcal{P}_p)
$$
where $R(p)=\{x\in X:\ h(p,x)\in C\}$ is the feasible region and
$\varphi$ the objective functional.
The associated {\it optimal value} (or {\it marginal}) {\it function}
$\valf:P\longrightarrow\R\cup\{\pm\infty\}$ is
$$
    \valf(p)=\inf_{x\in  R(p)}\varphi(p,x).
$$
The associated global solution (minimizer) set, denoted by $\Argmin:P
\longrightarrow 2^X$, is the (generally) set-valued mapping
$$
    \Argmin(p)=\{x\in R(p):\ \varphi(p,x)=\valf(p)\}.
$$
Observe that in such format, the class of problems $(\mathcal{P}_p)$
needs no linear structure. However, for performing any quantitative
analysis of it, the parameter space $P$  must be endowed at least
with a metric structure.
Owing to its abstract formalism, $(\mathcal{P}_p)$ can cover large
classes of constrained (and unconstrained) optimization problems.
A particular case is the standard nonlinear programming problem:
in such event $X$ becomes a finite-dimensional Euclidean space
and the feasible region is given by the solution set of a system of
finitely many equalities/inequalities, namely
$R(p)=\{x\in X:\ h_i(p,x)=0,\ i=1,\dots,m_1; \ 
h_i(p,x)\le 0,\ i=m_1+1,\dots,m_1+m_2\}$.

\begin{definition}      \label{def:procalm}
With reference to a parametric family of problems $(\mathcal{P}_p)$,
let $\bar p\in P$ and let $\bar x\in R(\bar p)$ be a global
solution for $(\mathcal{P}_{\bar p})$. Problem $(\mathcal{P}_{\bar p})$
is said to be {\it calm} at $\bar x$ if there exists a positive
constant $r>0$ such that
$$
    \inf_{p\in \ball(\bar p,r)\backslash\{\bar p\}} \inf_{x\in R(p)
    \cap \ball(\bar x,r)}\ 
    \frac{\varphi(p,x)-\varphi(\bar x,\bar p)}{d(p,\bar p)}
    >-\infty.
$$
\end{definition}

Calmness for parametric constrained optimization problems and
calmness from below for functionals are readily linked via the value function,
as stated in the following proposition.

\begin{proposition}       \label{pro:pcalmfromcalm}
Let $\bar p\in P$ and let $\bar x\in X$ be a global solution for
$(\mathcal{P}_{\bar p})$. If the value function $\valf:P
\longrightarrow\R\cup\{\pm\infty\}$ associated to $(\mathcal{P}_p)$
is calm from below at $\bar p$ then problem $(\mathcal{P}_{\bar p})$
is calm at $\bar x$.
\end{proposition}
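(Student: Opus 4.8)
The plan is to unwind both definitions and derive the conclusion by a direct comparison of the relevant difference quotients. I would start from the hypothesis that $\valf$ is calm from below at $\bar p$, which by definition yields a finite constant
$$
    \gamma=\liminf_{p\to\bar p}\frac{\valf(p)-\valf(\bar p)}{d(p,\bar p)}>-\infty.
$$
Since the $\liminf$ exceeds $\gamma-1>-\infty$, there is a radius $r>0$ such that $\valf(p)-\valf(\bar p)\ge(\gamma-1)\,d(p,\bar p)$ for all $p\in\ball(\bar p,r)\setminus\{\bar p\}$. The key structural fact is that, because $\bar x$ is a global solution for $(\mathcal{P}_{\bar p})$, we have $\valf(\bar p)=\varphi(\bar p,\bar x)$, so the numerator controlling calmness of the problem can be rewritten in terms of the value function.

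Next I would bound the problem's difference quotient from below. For any $p\in\ball(\bar p,r)\setminus\{\bar p\}$ and any feasible $x\in R(p)$, the definition of the value function gives $\varphi(p,x)\ge\valf(p)$. Therefore
$$
    \varphi(p,x)-\varphi(\bar p,\bar x)\ge\valf(p)-\valf(\bar p)\ge(\gamma-1)\,d(p,\bar p),
$$
so dividing by the positive quantity $d(p,\bar p)$ yields the uniform lower bound
$$
    \frac{\varphi(p,x)-\varphi(\bar p,\bar x)}{d(p,\bar p)}\ge\gamma-1
$$
for every such $p$ and every $x\in R(p)$. Taking the infimum over $x\in R(p)\cap\ball(\bar x,r)$ and then over $p\in\ball(\bar p,r)\setminus\{\bar p\}$ preserves this bound, which is exactly the finiteness required in Definition \ref{def:procalm}. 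Here one must take care that the difference quotient in Definition \ref{def:procalm} is written as $\frac{\varphi(p,x)-\varphi(\bar x,\bar p)}{d(p,\bar p)}$, so I would silently read $\varphi(\bar x,\bar p)$ as $\varphi(\bar p,\bar x)$, the value at the reference solution.

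I do not anticipate a serious obstacle here: the argument is essentially the observation that restricting the global infimum over $R(p)$ to the local intersection $R(p)\cap\ball(\bar x,r)$ can only increase the infimum, so the global lower bound supplied by $\valf$ transfers verbatim to the localized quantity defining problem calmness. The only point deserving mild attention is ensuring the radii match: the $r>0$ extracted from the $\liminf$ condition on $\valf$ can be taken as the single radius in Definition \ref{def:procalm}, since the estimate holds uniformly in $x$ with no ball restriction on $x$ needed for the lower bound. Thus the bound over $R(p)\cap\ball(\bar x,r)$ follows a fortiori, and the proof is complete.
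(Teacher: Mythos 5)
Your argument is correct and is essentially the paper's own proof: both rest on the identity $\valf(\bar p)=\varphi(\bar p,\bar x)$ together with the observation that the infimum over $R(p)\cap\ball(\bar x,r)$ dominates the infimum over all of $R(p)$, which equals the value-function difference quotient, so the lower bound from calmness from below transfers directly. The extra step of extracting an explicit radius $r$ and constant $\gamma-1$ from the $\liminf$ is a harmless elaboration of the same idea.
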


\begin{proof}
Notice that, being $\bar x\in\Argmin(\bar p)$, it is $\bar p
\in\dom\valf$. Then, it suffices to observe that for any
$p\in P$ and $r>0$, according to the definition of $\valf$,
one has
$$
   \inf_{x\in R(p)\cap \ball(\bar x,r)}\frac{\varphi(p,x)-
  \varphi(\bar x,\bar p)}{d(p,\bar p)}\ge
    \inf_{x\in R(p)}\frac{\varphi(p,x)-\varphi(\bar x,\bar p)}
   {d(p,\bar p)}=\frac{\valf(p)-\valf(\bar p)}{d(p,\bar p)}
$$
and to recall Definition \ref{def:procalm}, along with the definition
of calmness from below.
\end{proof}

It is worth mentioning that, in the particular case in which $X$
and $Y$ are normed vector spaces and the problem data take the
special form
$$
    \varphi(p,x)=\varphi(x)\qquad\hbox{and}\qquad h(p,x)=g(x)-p,
$$
the problem calmness of $(\mathcal{P}_{\bar p})$ at $\bar x\in
\Argmin(\bar p)$ is known to characterize the existence of penalty
functions (see \cite{Burk91}). The next proposition provides a
sufficient condition for $\valf$ to be calm from above.

\begin{proposition}
With reference to a parametric class of problem $(\mathcal{P}_p)$,
let $\bar p\in P$ and $\bar x\in\Argmin(\bar p)$. If the set-valued
mapping $R:P\longrightarrow 2^X$ is Lipschitz l.s.c. at $(\bar p,
\bar x)$ and $\varphi:P\times X\longrightarrow\R$ is calm from
above at $(\bar p,\bar x)$, then function $\valf:P\longrightarrow
\R\cup\{\pm\infty\}$ is calm from above at $\bar p$.
\end{proposition}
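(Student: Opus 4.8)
The plan is to use the Lipschitz lower semicontinuity of $R$ as a device for manufacturing, for each parameter $p$ near $\bar p$, an admissible state $x_p\in R(p)$ that stays quantitatively close to $\bar x$, and then to bound $\valf(p)$ from above by the value of the objective at that very point, the closeness being exactly what lets the calmness from above of $\varphi$ come into play. First I would record the finiteness bookkeeping: since $\bar x\in\Argmin(\bar p)$ one has $\valf(\bar p)=\varphi(\bar p,\bar x)\in\R$, so $\bar p\in\dom\valf$ and the difference quotient defining calmness from above of $\valf$ is well posed. Next, the Lipschitz l.s.c. hypothesis furnishes constants $\zeta,l>0$ with $R(p)\cap\ball(\bar x,l\,d(p,\bar p))\ne\varnothing$ for every $p\in\ball(\bar p,\zeta)$; for each such $p$ I would pick $x_p$ in this intersection, so that simultaneously $x_p\in R(p)$ and $d(x_p,\bar x)\le l\,d(p,\bar p)$.

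The estimate then assembles in three strokes. Because $x_p$ is feasible for $(\mathcal{P}_p)$, the very definition of the optimal value function gives $\valf(p)\le\varphi(p,x_p)$. Writing the calmness from above of $\varphi$ at $(\bar p,\bar x)$ in its working form — the existence of $L,r>0$ such that $\varphi(p,x)-\varphi(\bar p,\bar x)\le L\bigl(d(p,\bar p)+d(x,\bar x)\bigr)$ whenever the pair $(p,x)$ lies within distance $r$ of $(\bar p,\bar x)$ in the product metric — I would check that for $p$ with $d(p,\bar p)\le\min\{\zeta,\,r/(1+l)\}$ the companion pair $(p,x_p)$ does sit in that neighbourhood, since $d(p,\bar p)+d(x_p,\bar x)\le(1+l)\,d(p,\bar p)\le r$. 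Combining the two inequalities yields
$$
\valf(p)-\valf(\bar p)\le\varphi(p,x_p)-\varphi(\bar p,\bar x)\le L\bigl(d(p,\bar p)+d(x_p,\bar x)\bigr)\le L(1+l)\,d(p,\bar p),
$$
whence, dividing by $d(p,\bar p)$ and passing to $\displaystyle\limsup_{p\to\bar p}$, the calmness from above of $\valf$ at $\bar p$ follows, with modulus at most $L(1+l)$.

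The only delicate point — and the one I expect to be the main, if modest, obstacle — is the bookkeeping of radii: one must shrink the neighbourhood of $\bar p$ so that the $p$-dependent point $x_p$, whose distance from $\bar x$ is controlled only by $l\,d(p,\bar p)$, still falls inside the region where the calmness estimate for $\varphi$ is guaranteed. It is precisely the \emph{uniform} bound $d(x_p,\bar x)\le l\,d(p,\bar p)$ supplied by Lipschitz lower semicontinuity that makes this matching possible; plain lower semicontinuity of $R$, which would place $x_p$ merely near $\bar x$ without a Lipschitzian rate, would break the proportionality with $d(p,\bar p)$ and so would not deliver a finite upper calmness modulus.
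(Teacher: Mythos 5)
Your proposal is correct and follows essentially the same route as the paper's own proof: select $x_p\in R(p)\cap\ball(\bar x,l\,d(p,\bar p))$ via Lipschitz lower semicontinuity, bound $\valf(p)\le\varphi(p,x_p)$ by feasibility, and invoke the neighbourhood form of the calmness-from-above estimate for $\varphi$ after shrinking the radius so that $x_p$ lands in the region where that estimate holds (the paper takes $\zeta<\delta/l$, you take $d(p,\bar p)\le\min\{\zeta,r/(1+l)\}$ — the same bookkeeping). The resulting modulus $L(1+l)$ matches the paper's $\kappa[1+l]$.
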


\begin{proof}
Since $\varphi$ is calm from above at $(\bar p,\bar x)$, there
exist positive $\delta$ and $\kappa$ such that, using the sum metric
on the space $P\times X$, it holds
\begin{eqnarray}    \label{in:facalobj}
   \varphi(p,x)-\varphi(\bar p,\bar x)\le\kappa[d(p,\bar p)+d(x,\bar x)],
  \quad\forall x\in\ball (\bar x,\delta),\ \forall p\in\ball (\bar p,\delta).
\end{eqnarray}
By the Lipschitz lower semicontinuity of $R$ at $(\bar p,\bar x)$ one gets the
existence of positive $\zeta$ and $l$ such that
$$
    R(p)\cap \ball(\bar x,ld(p,\bar p))\ne\varnothing,\quad
    \forall p\in\ball (\bar p,\zeta).
$$
This means that if $p\in\ball(\bar p,\zeta)$ there exists $x_p\in
R(p)$ such that
$$
    d(x_p,\bar x)\le ld(p,\bar p).
$$
Notice that without any loss of generality it is possible to assume
that $\zeta<\delta/l$. Such an assumption implies that  it is also $x_p\in
\ball(\bar x,\delta)$. From the above estimation of $d(x_p,\bar x)$,
by taking account of (\ref{in:facalobj}), one obtains
$$
    \frac{\valf(p)-\valf(\bar p)}{d(p,\bar p)}\le
    \frac{\varphi(p,x_p)-\varphi(\bar p,\bar x)}{d(p,\bar p)}\le
   \frac{\kappa[d(p,\bar p)+d(x_p,\bar x)]}{d(p,\bar p)}\le
   \kappa[1+l],\quad\forall p\in\ball (\bar p,\zeta)\backslash
   \{\bar p\}.
$$
Consequently, this allows to conclude that
$$
    \limsup_{p\to\bar p}\frac{\valf(p)-\valf(\bar p)}{d(p,\bar p)}
    \le\kappa[1+l]<+\infty.
$$
\end{proof}

A sufficient condition for $\valf$ to be calm from below is established
in the next proposition.

\begin{proposition}
With reference to a parametric class of problem $(\mathcal{P}_p)$,
let $\bar p\in P$ and $\bar x\in\Argmin(\bar p)$. If the set-valued
mapping $R:P\longrightarrow 2^X$ is upper Lipschitz at $\bar p$
and $\varphi$ is Lipschitz continuous on $P\times X$, then
$\valf:P\longrightarrow\R\cup\{\pm\infty\}$ is calm from below
at $\bar p$ and problem $(\mathcal{P}_{\bar p})$ is calm at
$\bar x$.
\end{proposition}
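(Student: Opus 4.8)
The plan is to establish the calmness from below of $\valf$ directly from the two hypotheses, and then to invoke Proposition \ref{pro:pcalmfromcalm} for the second assertion. First I would record that $\bar p\in\dom\valf$: since $\bar x\in\Argmin(\bar p)$, the value $\valf(\bar p)=\varphi(\bar p,\bar x)$ is finite, so the difference quotient appearing in the definition of calmness from below is well posed.

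The heart of the argument is a lower estimate on $\valf(p)$ for $p$ close to $\bar p$. Let $\zeta,\ell>0$ be the constants furnished by the upper Lipschitz property of $R$ at $\bar p$, and let $L>0$ be a Lipschitz constant for $\varphi$ on $P\times X$ (with respect to the sum metric). Fix $p\in\ball(\bar p,\zeta)$. If $R(p)=\varnothing$ then $\valf(p)=+\infty$ and there is nothing to estimate, so I would assume $R(p)\ne\varnothing$ and take any $x\in R(p)$. By (\ref{inc:uLipset}) one has $\dist(x,R(\bar p))\le\ell\,d(p,\bar p)$; since the distance need not be attained in a general metric space, for arbitrary $\epsilon>0$ I would select $x'\in R(\bar p)$ with $d(x,x')\le\ell\,d(p,\bar p)+\epsilon$. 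The Lipschitz continuity of $\varphi$ then yields
$$
\varphi(p,x)\ge\varphi(\bar p,x')-L[d(p,\bar p)+d(x,x')]
\ge\valf(\bar p)-L(1+\ell)\,d(p,\bar p)-L\epsilon,
$$
where the last inequality uses $\varphi(\bar p,x')\ge\valf(\bar p)$ because $x'\in R(\bar p)$.

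Since the final bound is independent of $x$, passing to the infimum over $x\in R(p)$ and then letting $\epsilon\downarrow 0$ gives
$$
\valf(p)\ge\valf(\bar p)-L(1+\ell)\,d(p,\bar p),\qquad\forall p\in\ball(\bar p,\zeta),
$$
an inequality that remains (trivially) valid when $R(p)=\varnothing$. Dividing by $d(p,\bar p)$ for $p\ne\bar p$ and taking the lower limit produces
$$
\liminf_{p\to\bar p}\frac{\valf(p)-\valf(\bar p)}{d(p,\bar p)}\ge -L(1+\ell)>-\infty,
$$
which is exactly the calmness from below of $\valf$ at $\bar p$. The calmness of $(\mathcal{P}_{\bar p})$ at $\bar x$ then follows at once from Proposition \ref{pro:pcalmfromcalm}.

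I expect no serious obstacle here; the only points demanding care are purely technical. One must accommodate the possible non-attainment of $\dist(x,R(\bar p))$ through the auxiliary $\epsilon$, handle the degenerate case $R(p)=\varnothing$ separately, and keep track of the correct direction of the inequality — calmness from below concerns how fast $\valf$ may \emph{decrease}, so the required estimate is a \emph{lower} bound on $\valf(p)$, obtained by comparing feasible points of $R(p)$ with nearby feasible points of $R(\bar p)$.
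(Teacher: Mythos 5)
Your proof is correct and follows essentially the same route as the paper: compare each $x\in R(p)$ with a nearby point of $R(\bar p)$ supplied by the upper Lipschitz property, use the global Lipschitz continuity of $\varphi$ to get a uniform lower bound on $\varphi(p,x)$, pass to the infimum, and then invoke Proposition \ref{pro:pcalmfromcalm}. The only (immaterial) difference is that you handle non-attainment of the distance with an auxiliary $\epsilon$ sent to zero, yielding the constant $-L(1+\ell)$, whereas the paper absorbs it by picking $z_x$ with $d(z_x,x)\le(\ell+1)d(p,\bar p)$, yielding $-\kappa(\ell+2)$.
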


\begin{proof}
The Lipschitz continuity of $\varphi$ on $P\times X$, ensures
the existence of a positive $\kappa$ such that, by equipping
$P\times X$ with the metric sum, it holds
\begin{eqnarray}    \label{in:Lipobj}
    |\varphi(p,x_1)-\varphi(p,x_2)|\le\kappa [d(x_1,x_2)+d(p_1,p_2)],
    \quad\forall x_1,\, x_2\in X,\ \forall p_1,\, p_2\in P.
\end{eqnarray}
Since $R$ has been supposed to be upper Lipschitz continuous at
$\bar p$, there are $\ell\ge 0$ and $\tilde\zeta>0$
such that
$$
    \dist(x,R(\bar p))\le \ell d(p,\bar p),\quad\forall x\in R(p),\ 
    \forall p\in\ball (\bar p,\tilde\zeta).
$$
The above inequality means that  for every $x\in R(p)$ there is
$z_x\in R(\bar p)$ satisfying the property
$$
   d(z_x,x)\le (\ell+1)d(p,\bar p),
$$
provided that $p\in\ball (\bar p,\tilde\zeta)$. Therefore, by using
the metric sum in $P\times X$ and inequality (\ref{in:Lipobj}),
one obtains
$$
     \varphi(p,x)\ge\varphi(\bar p,z_x)-\kappa[d(p,\bar p)+d(x,z_x)]
     \ge \varphi(\bar p,\bar x)-\kappa[\ell+2]d(p,\bar p),\quad
     \forall x\in R(p),
$$
whence it follows
$$
   \frac{\valf(p)-\valf(\bar p)}{d(p,\bar p)}\ge
   \frac{\displaystyle\inf_{x\in R(p)}\varphi(p,x)-\varphi(\bar p,\bar x)}{d(p,\bar p)}
   \ge -\kappa[\ell+2],\quad\forall p\in\ball (\bar p,\zeta)\backslash
   \{\bar p\}.
$$
The second assertion of the thesis is a straightforward consequence
of the first one in the light of Proposition \ref{pro:pcalmfromcalm}.
\end{proof}

If relaxing the assumption of Lipschitz upper semicontinuity on $R$
using instead calmness, and the assumption of Lipschitz continuity on $\varphi$
using its local counterpart, one can still obtain a calmness behaviour
of the value function, yet in a weaker form. In order to introduce it,
given $\hat x\in X$, define function $\locvalf{\hat x}:P\times (0,+\infty)
\longrightarrow\R\cup\{\pm\infty\}$ as follows
$$
    \locvalf{\hat x}(p,r)=\inf_{x\in R(p)\cap \ball(\hat x,r)}\varphi(p,x).
$$

\begin{proposition}
With reference to a parametric class of problem $(\mathcal{P}_p)$,
let $\bar p\in P$ and $\bar x$ be a local minimizer for $(\mathcal{P}_{\bar p})$.
If the set-valued mapping $R:P\longrightarrow 2^X$ is calm at $(\bar p,\bar x)$
and $\varphi$ is locally Lipschitz near $(\bar p,\bar x)$, then there
exists $r_0>0$ such that $\locvalf{\bar x}(\cdot,r_0)$ is calm from below
at $\bar p$.
\end{proposition}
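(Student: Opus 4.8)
The plan is to specialize the argument used for the two preceding propositions, localizing every estimate around $(\bar p,\bar x)$. Write $\psi(\cdot):=\locvalf{\bar x}(\cdot,r_0)$ for a radius $r_0$ to be fixed. Three batches of constants are available: local minimality of $\bar x$ furnishes $\rho>0$ with $\varphi(\bar p,x)\ge\varphi(\bar p,\bar x)$ for every $x\in R(\bar p)\cap\ball(\bar x,\rho)$; local Lipschitzianity of $\varphi$ furnishes $\delta'>0$ and $\kappa>0$ with $|\varphi(p_1,x_1)-\varphi(p_2,x_2)|\le\kappa[d(p_1,p_2)+d(x_1,x_2)]$ on $\ball(\bar p,\delta')\times\ball(\bar x,\delta')$; and calmness of $R$ at $(\bar p,\bar x)$ furnishes $\delta,\zeta,\ell>0$ with $\dist(x,R(\bar p))\le\ell\,d(p,\bar p)$ for every $x\in R(p)\cap\ball(\bar x,\delta)$ and every $p\in\ball(\bar p,\zeta)$. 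First I would set $r_0:=\frac{1}{2}\min\{\rho,\delta,\delta'\}$. Since $\bar x\in R(\bar p)\cap\ball(\bar x,r_0)$ and $r_0\le\rho$, local minimality yields $\psi(\bar p)=\varphi(\bar p,\bar x)\in\R$, so $\bar p\in\dom\psi$.

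Next I would establish the one-sided estimate on $\ball(\bar p,r_1)$ with $r_1:=\min\{\zeta,\delta',\frac{\min\{\rho,\delta'\}}{2(\ell+1)}\}$. Fix $p\in\ball(\bar p,r_1)\setminus\{\bar p\}$. If $R(p)\cap\ball(\bar x,r_0)=\varnothing$ then $\psi(p)=+\infty$ and the difference quotient is $+\infty$, so nothing is to prove; otherwise pick any $x\in R(p)\cap\ball(\bar x,r_0)$. As $r_0\le\delta$ and $p\in\ball(\bar p,\zeta)$, calmness produces $z_x\in R(\bar p)$ with $d(x,z_x)\le(\ell+1)d(p,\bar p)$ (the extra unit absorbing possible non-attainment of the distance). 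The crucial verification is that $z_x$ lands where both local minimality and the Lipschitz estimate hold: $d(z_x,\bar x)\le d(z_x,x)+d(x,\bar x)\le(\ell+1)d(p,\bar p)+r_0\le\min\{\rho,\delta'\}$ by the choice of $r_1$ and $r_0$. Hence $z_x\in R(\bar p)\cap\ball(\bar x,\rho)$ gives $\varphi(\bar p,z_x)\ge\varphi(\bar p,\bar x)$, while applying the Lipschitz inequality to the pair $(p,x)$ and $(\bar p,z_x)$ gives $\varphi(p,x)\ge\varphi(\bar p,z_x)-\kappa[d(p,\bar p)+d(x,z_x)]\ge\varphi(\bar p,\bar x)-\kappa(\ell+2)d(p,\bar p)$. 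Taking the infimum over $x\in R(p)\cap\ball(\bar x,r_0)$ yields $\psi(p)\ge\psi(\bar p)-\kappa(\ell+2)d(p,\bar p)$.

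Finally, dividing by $d(p,\bar p)>0$ and passing to the lower limit gives $\liminf_{p\to\bar p}\frac{\psi(p)-\psi(\bar p)}{d(p,\bar p)}\ge-\kappa(\ell+2)>-\infty$, which is precisely calmness from below of $\locvalf{\bar x}(\cdot,r_0)$ at $\bar p$. I expect the only genuinely delicate point to be the radii bookkeeping in the second paragraph: unlike the earlier global statements (where $R$ is upper Lipschitz and $\varphi$ globally Lipschitz, so no ball localization of the feasible set is needed and the displaced point $z_x$ is automatically usable), here the displaced feasible point $z_x$ must be kept simultaneously inside the local-minimality ball $\ball(\bar x,\rho)$ and the Lipschitz ball $\ball(\bar x,\delta')$. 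This forces $r_0$ to be shrunk and $d(p,\bar p)$ to be restricted so that the drift $(\ell+1)d(p,\bar p)$ added to $r_0$ does not push $z_x$ out of these neighbourhoods; getting this coupling right is the heart of the localization.
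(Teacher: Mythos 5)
Your argument is correct and follows essentially the same route as the paper's own proof: displace each feasible $x\in R(p)\cap\ball(\bar x,r_0)$ to some $z_x\in R(\bar p)$ via calmness, keep $z_x$ inside both the local-minimality and Lipschitz neighbourhoods, and combine the two to get the lower bound $-\kappa(\ell+2)$ on the difference quotient. If anything, your radii bookkeeping is slightly more careful than the paper's, which only verifies that $z_x$ stays in the Lipschitz ball and leaves implicit the (equally necessary, and equally easy) check that $z_x$ stays in the ball where $\bar x$ is a minimizer.
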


\begin{proof}
Since $\bar x$ is a local minimizer for $(\mathcal{P}_{\bar p})$,
for some $\hat\delta>0$ one has
$$
    \varphi(\bar p,x)\ge\varphi(\bar p,\bar x),\quad\forall x\in
    R(\bar p)\cap\ball (\bar x,\hat\delta).
$$
Being $\varphi$ locally Lipschitz around $(\bar p,\bar x)$, there
exist positive reals $\kappa$, $\tilde\delta$, and $\tilde\zeta$ such that
\begin{eqnarray}    \label{in:locLippx}
    |\varphi(p_1,x_1)-\varphi(p_2,x_2)|\le\kappa [d(p_1,p_2)+
       d(x_1,x_2)],\quad\forall p_1,\, p_2\in\ball (\bar p,\tilde\zeta),
      \ \forall x_1,\, x_2\in\ball (\bar x,\tilde\delta).
\end{eqnarray}
Since $R$ is calm at $(\bar p,\bar x)$, positive $\ell$, $\delta$
and $\zeta$ must exist such that
\begin{eqnarray}    \label{in:calmRlocvalf}
    \dist(x,R(\bar p))\le\ell d(p,\bar p),\quad\forall x\in R(p)
    \cap\ball(\bar x,\delta),\ \forall p\in\ball (\bar p,\zeta).
\end{eqnarray}
Without loss of generality, it is possible to assume
$$
   \delta<\min\left\{\frac{\tilde\delta}{3},\, \hat\delta\right\}
   \qquad \hbox{and}\qquad
   \zeta<\min\left\{\tilde\zeta,\, \frac{\tilde\delta}{3(\ell+1)}
   \right\}.
$$
By inequality (\ref{in:calmRlocvalf}) one has that for every $x\in
R(p)\cap\ball(\bar x,\delta)$ a $z_x\in R(\bar p)$ can be found such
that
$$
    d(z_x,x)\le(\ell+1)d(p,\bar p).
$$ 
Notice that, by virtue of the positions taken on $\delta$ and
$\zeta$, whenever $p\in\ball(\bar p,\zeta)$ and $x\in\ball(\bar x,
\delta)$ it results in
$$
    d(z_x,\bar x)\le d(z_x,x)+d(x,\bar x)\le (\ell+1)\zeta+\delta<
    \frac{2}{3}\tilde\delta<\tilde\delta.
$$
Thus, by exploiting inequality (\ref{in:locLippx}), one obtains
\begin{eqnarray*}
    \varphi(p,x) \ge \varphi(\bar p,z_x)-\kappa[d(p,\bar p)+d(x,z_x)]
   \ge \varphi(\bar p,\bar x)-\kappa[\ell+2]d(p,\bar p),\quad
   \forall x\in R(p)\cap\ball (\bar x,\delta),\ \forall p\in\ball (\bar p,\zeta),
\end{eqnarray*}
wherefrom it follows
$$
     \frac{\locvalf{\bar x}(p,\delta)-\locvalf{\bar x}(\bar p,\delta)}{d(p,\bar p)}
    \ge -\kappa[\ell+2]>-\infty,\quad\forall p\in\ball
   (\bar p,\zeta)\backslash\{\bar p\}.
$$
To complete the proof it suffices to set $r_0=\delta$.
\end{proof}


\subsection{Tools of generalized differentiation}

Let us start with introducing selected derivative-like tools, which enable
to carry out  a local variational analysis in metric spaces. The
basic one is the strong slope of a functional, originally proposed
in \cite{DeMaTo80} for quite different purposes. Given a function
$\varphi:X\longrightarrow\R\cup\{\pm\infty\}$, by {\it strong slope}
of $\varphi$ at $\bar x\in X$ the real-extended value
$$
   \stsl\varphi (\bar x)=\left\{ \begin{array}{ll}
                       0, & \qquad\text{if $\bar x$ is a local minimizer for $\varphi$},  \\
                       \displaystyle\limsup_{x\to\bar x} 
                       \frac{\varphi(\bar x)-\varphi(x)}{d(x,\bar x)}, & 
                       \qquad\text{otherwise.} 
                 \end{array}
   \right.
$$
is meant. The utility of its employment has been demonstrated
in several circumstances (see \cite{AzCoLu02,Ioff00}).
In the present paper the strong slope will be merely exploited as an element
to construct a more robust object, considering ``variational properties" of
$\varphi$ also at points of $\lev_{{}_{>\varphi(\bar x)}}\varphi$
near $\bar x$. This object is the {\it strict outer slope} of $\varphi$ at $\bar x$,
which is denoted by $\sostsl\varphi (\bar x)$ and defined by
$$
    \sostsl\varphi (\bar x)=\lim_{\epsilon\to 0^+}\inf
    \{\stsl\varphi (x): x\in \ball(\bar x,\epsilon),\ \varphi(\bar x)<
     \varphi(x)\le\varphi(\bar x)+\epsilon\}.
$$
The latter will be the key tool for formulating the main Lipschitz
semicontinuity criteria proposed in paper.
In order to estimate the strict outer slope in the more structured
setting of Banach spaces, one needs to deal with more refined
generalized differentiation constructions, namely subdifferentials
and coderivatives. Nonsmooth analysis abunds with notions of
this kind.  According to the features of the study here exposed,
the following dual object,
which is based on the Fr\'echet $\epsilon$-subdifferential and called
{\it strict outer $\epsilon$-subdifferential}, turns out to be
useful
$$
     \soesubdif\varphi(\bar x)=\bigcup\left\{\fesubdif\varphi(x):\ 
     x\in\ball(\bar x,\epsilon),\ \varphi(\bar x)<\varphi(x)\le
     \varphi(\bar x)+\epsilon\right\}
$$
where, given  $\epsilon\ge 0$, the set
$$
   \fesubdif\varphi(\bar x)=\left\{ x^*\in\X^*:\ \liminf_{x\to\bar x}
   \frac{\varphi(x)-\varphi(\bar x)-\langle x^*,x-\bar x\rangle}
   {\|x-\bar x\|}>-\epsilon\right\}
$$
denotes the Fr\'echet $\epsilon$-subdifferential (presubdifferential)
of $\varphi$ at $\bar x$. For $\epsilon=0$, one gets exactly the
canonical Fr\'echet (alias regular) subdifferential of $\varphi$ at $\bar x$ (in
\cite{BorZhu05,Mord06,Mord06b,RocWet98,Schi07} the related
theory, with examples and applications, is presented in full detail).

In a special class of Banach spaces the next subdifferential
construction, called {\it strict outer subdifferential slope} of
$\varphi$ at $\bar x$ and defined as follows
$$
    \soesubdifsl{\varphi}(\bar x)=\lim_{\epsilon\to 0^+}\inf
    \{\|x^*\|:\ x^*\in \soesubdif\varphi(\bar x)\},
$$
revealed to be a proper tool, in order to provide a characterization
of the stric outer slope.

Some useful facts concerning the strict outer slope of a functional and its dual
characterizations in terms of strict outer subdifferential slope
are collected in the following remark. Their proofs in some case are 
straightforward, in some other can be found in \cite{FaHeKrOu10,Ioff00,Schi07}.

\begin{remark}    \label{rem:stsllipder}

(i) Let $\varphi:X\longrightarrow\R\cup\{\pm\infty\}$ and
$\psi:X\longrightarrow\R$
given functions, with $\psi$ locally Lipschitz near $x\in\dom\varphi$, having
Lipschitz constant $k$. Then one has
$$
    \stsl{(\varphi+\psi)} (x)\ge \stsl \varphi (x)-k.
$$
Consequently, it follows
$$
    \sostsl{(\varphi+\psi)} (x)\ge \sostsl \varphi (x)-k.
$$

(ii) If $(\X,\|\cdot\|)$ is an Asplund space and $\varphi:\X
\longrightarrow\R\cup\{\pm\infty\}$ is l.s.c. near $\bar x\in\dom
\varphi$, the following exact estimation holds true
$$
    \sostsl\varphi (\bar x)=\soesubdifsl{\varphi}(\bar x).
$$
Let us recall that a Banach space is said to be {\it Asplund} provided
that each of its separable subspaces admits separable dual.
It is to be noted that, with the development of variational and
extremal principles, alternative characterizations of the notion of
Asplundity have been discovered, which are expressed in terms of
dense Fr\'echet differentiability or dense Fr\'echet subdifferentiability
properties of continuous convex or merely l.s.c. functions,
respectively (see \cite{Mord06,Schi07}). Even though it excludes
certain Banach spaces (for example, the space $\ell_1(\N)$), the
class of Asplund spaces is sufficiently rich for major applications.
For instance, it includes all weakly compactly generated spaces,
and hence all reflexive spaces.

Again, the above assumptions on $\X$ and $\varphi$ enable one to exploit a simpler
representation for the strict outer $\epsilon$-subdifferential
as follows
$$
     \soesubdif\varphi(\bar x)=\bigcup\left\{\widehat\partial
     \varphi(x):\ x\in\ball(\bar x,\epsilon),\ \varphi(\bar x)
    <\varphi(x)\le\varphi(\bar x)+\epsilon\right\}.
$$

(iii) It is well known that, whenever $\varphi$ is Fr\'echet
differentiable at $\bar x$, with derivarite $\FrDer\varphi
(\bar x)\in\X^*$, it results in
$$
   \stsl{\varphi}(\bar x)=\|\FrDer\varphi(\bar x)\|.
$$
Analogously, whenever  $\varphi$ is strictly differentiable at
$\bar x$, with strict derivarite $\StDer\varphi(\bar x)\in\X^*$,
it holds
$$
    \sostsl{\varphi}(\bar x)=\|\StDer\varphi(\bar x)\|.
$$

\end{remark}


It is well known that for the canonical Fr\'echet subdifferential
a nonconvex counterpart of the Moreau-Rockafellar rule generally
fails to hold. Nonetheless, in Asplund spaces such a rule can be
restored in an approximate form, called fuzzy sum rule. In view
of its employment in a subsequent section, it is recalled in the
next lemma (see \cite{BorZhu05,Ioff00,Mord06,Schi07}).

\begin{lemma}     \label{lem:fuzsumrule}
Let $(\X,\|\cdot\|)$ be an Asplund space, let $\psi_1:\X\longrightarrow
\R$ and $\psi_2:\X\longrightarrow\R\cup\{\pm\infty\}$ given functions,
and let $\bar x\in\X$. Suppose $\psi_1$ to be Lipschitz continuous
around $\bar x$ and $\psi_2$ to be l.s.c. in a neighbourhood of
$\bar x\in\dom\psi_2$. Then, for any $\eta>0$ there
exist $x_1,\, x_2\in \ball(\bar x,\eta)$ such that
$$
     |\psi_i(x_i)-\psi_i(\bar x)|\le\eta,\quad i=1,\, 2,
$$
and
$$
    \fsubdif(\psi_1+\psi_2)(\bar x)\subseteq \fsubdif\psi_1(x_1)
   +\fsubdif\psi_2(x_2)+\eta\uball^*.
$$
\end{lemma}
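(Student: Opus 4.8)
The plan is to run the standard engine of fuzzy calculus in Asplund spaces — a smooth variational principle applied to a decoupled, localized version of the sum — but to set it up with an eye on the genuinely delicate feature of the statement, namely that a \emph{single} pair $(x_1,x_2)$ is required to split every Fr\'echet subgradient of $\psi_1+\psi_2$ at once. Fixing $\eta>0$, I would pass to the product space $\X\times\X$, which is again Asplund and hence carries a smooth Lipschitz bump, and consider the functional $(u,v)\mapsto\psi_1(u)+\psi_2(v)+\tfrac{1}{2\lambda}\|u-v\|^2+\|u-\bar x\|^2$. The coupling penalty ties $u$ to $v$ while the localizing term pins near-minimizers to $\bar x$; the lower semicontinuity of $\psi_2$ and the continuity of $\psi_1$ then force any near-minimizer to cluster at $(\bar x,\bar x)$ and to satisfy the value control $|\psi_i(x_i)-\psi_i(\bar x)|\le\eta$ once $\lambda$ is small.

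Applying the Borwein--Preiss / Deville--Godefroy--Zizler variational principle (\cite{BorZhu05}) to this functional produces points $x_1,x_2$ close to $\bar x$ at which an approximate optimality condition holds. Separating the two blocks of variables and using that $\psi_1$ is Lipschitz while the coupling and localizing terms are smooth, I would convert this into $x_1^*\in\fsubdif\psi_1(x_1)$ and $x_2^*\in\fsubdif\psi_2(x_2)$ whose sum reproduces the prescribed slope up to $\eta\uball^*$; the Asplund hypothesis is exactly what upgrades the approximate subgradients furnished by the principle into genuine Fr\'echet subgradients. Carried out for a prescribed $x^*\in\fsubdif(\psi_1+\psi_2)(\bar x)$ — which must be fed into the functional through a term $-\langle x^*,\tfrac{u+v}{2}\rangle$ — this records the inclusion $x^*\in\fsubdif\psi_1(x_1)+\fsubdif\psi_2(x_2)+\eta\uball^*$ for one slope at a time.

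The main obstacle is precisely this last dependence: the slope-encoding linear term perturbs the minimization, so a priori the located pair $(x_1,x_2)$ varies with $x^*$, whereas the lemma demands one pair valid across the whole convex weak${}^*$-closed set $\fsubdif(\psi_1+\psi_2)(\bar x)$, and a union of covering Minkowski sums does not by convexity collapse to a single one. To close this gap I would drive the construction by a single representative slope and then argue that the located pair already absorbs the entire subdifferential: any ``fatness'' of $\fsubdif(\psi_1+\psi_2)(\bar x)$ away from that representative must be inherited from a correspondingly fat $\fsubdif\psi_1(x_1)$ or $\fsubdif\psi_2(x_2)$ at the selected points — a phenomenon one verifies on examples, since a downward corner of the sum at $\bar x$ cannot arise unless one summand is already nonsmooth nearby — so that the fixed sum $\fsubdif\psi_1(x_1)+\fsubdif\psi_2(x_2)$, enlarged by $\eta\uball^*$, swallows every subgradient of $\psi_1+\psi_2$. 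Making this inheritance quantitative and uniform over $\fsubdif(\psi_1+\psi_2)(\bar x)$ is where I expect essentially all the difficulty to concentrate; without it the variational argument delivers only the weaker form with a union of pairs on the right, and the stated single-pair inclusion would require this extra uniformity as its decisive ingredient.
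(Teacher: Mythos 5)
Note first that the paper does not prove this lemma at all: it is explicitly \emph{recalled} from the literature (see the sentence preceding it, with the citations \cite{BorZhu05,Ioff00,Mord06,Schi07}), so your attempt can only be measured against the standard statement and proof found there. Your first two paragraphs reproduce that standard argument — decoupling the sum on $\X\times\X$ with a quadratic coupling penalty, localizing at $\bar x$, tilting by the prescribed $x^*$, and invoking a smooth variational principle — and this does yield the \emph{per-subgradient} version: for each fixed $x^*\in\fsubdif(\psi_1+\psi_2)(\bar x)$ and $\eta>0$ there exist $x_1,x_2\in\ball(\bar x,\eta)$, depending on $x^*$, with $|\psi_i(x_i)-\psi_i(\bar x)|\le\eta$ and $x^*\in\fsubdif\psi_1(x_1)+\fsubdif\psi_2(x_2)+\eta\uball^*$. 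Even there one step needs repair: your claim that an Asplund space ``carries a smooth Lipschitz bump'' is not available — whether every Asplund space admits a Fr\'echet-smooth bump function is a well-known open problem, and your localizing term $\|u-\bar x\|^2$ likewise presupposes a Fr\'echet-smooth (re)norm. The standard proofs either work in Fr\'echet-smooth renormable spaces or reach general Asplund spaces by Fabian's separable reduction; this is the route taken in \cite{BorZhu05}.

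The genuine gap is the one you flagged yourself in the third paragraph. The literal statement of the lemma asks for a \emph{single} pair $(x_1,x_2)$ serving the entire set $\fsubdif(\psi_1+\psi_2)(\bar x)$, and your proposed bridge — that any ``fatness'' of the sum's subdifferential must be inherited by $\fsubdif\psi_1(x_1)$ or $\fsubdif\psi_2(x_2)$ at one representative pair — is asserted, not proved, and there is no reason it should hold quantitatively: distinct subgradients $x^*$ tilt the penalized functional differently, so the variational principle locates different near-minimizers for each of them, and nothing forces the fixed sets at one pair to absorb all the resulting decompositions. As a proof of the statement as printed, the proposal is therefore incomplete, and the missing uniformity is not something the cited sources supply either — they all state the rule per subgradient. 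The mitigating observation is that the paper itself only ever uses the per-subgradient form: in the proof of Proposition \ref{pro:calmAspcod} the points $(x_1,y_1),(x_2,y_2)$ are introduced \emph{after} fixing $(x^*,y^*)\in\fsubdif\dispnull(x,y)$. Read in that (intended) form, your first two paragraphs — with the separable-reduction fix in place of the smooth bump — constitute the correct argument, and the extra uniformity you were chasing is neither available in the literature nor needed anywhere in the paper.
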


When dealing with sets and set-valued mappings, one needs further
elements of Fr\'echet nonsmooth calculus. Given a subset $\Omega
\subseteq\X$ and $\bar x\in\Omega$, the set of all the Fr\'echet normals to
$\Omega$ at $\bar x$, denoted by
$$
   \Normal(\bar x,\Omega)=\left\{x^*\in\X^*:\ \limsup_{\Omega\atop
    \displaystyle x\to\bar x}\frac{\langle x^*,x-\bar x\rangle}{\|x-\bar x\|}
   \le 0\right\},
$$
is called {\it Fr\'echet normal cone} to $\Omega$ at $\bar x$.
The Fr\'echet normal cone to a set is linked with the Fr\'echet
subdifferential via the set indicator function, as clarified by the
equality
$$
   \Normal(\bar x,\Omega)=\fsubdif\iota(\cdot,\Omega)(\bar x),
    \quad \bar x\in\X.
$$
In order to define derivatives of set-valued mappings, the graphical
approach relying on the notion of coderivative, which is due to
B.S. Mordukhovich (see \cite{Mord80}), is here adopted.
Given a multifunction $\Phi:\X\longrightarrow 2^\Y$ between
Banach spaces and $(\bar x,\bar y)\in\grph\Phi$, by {\it Fr\'echet
coderivative} of $\Phi$ at $(\bar x,\bar y)$ the set-valued mapping
$\Coder\Phi:\Y^*\longrightarrow 2^{\X^*}$, defined through the
Fr\'echet normal cone as follows
$$
     \Coder\Phi(\bar x,\bar y)(y^*)=\{x^*\in\X^*:\ (x^*,-y^*)\in\Normal
     ((\bar x,\bar y),\grph\Phi)\},\quad y^*\in\Y^*,
$$
is meant.
Being a positively homogeneous mapping, one can consider its
{\it outer norm}, i.e.
$$
    \|\Coder\Phi(\bar x,\bar y)\|_+=\sup_{\|y^*\|=1}
    \sup\{\|x^*\|:\ x^*\in\Coder\Phi(\bar x,\bar y)(y^*)\},
$$
which will be useful in a subsequent section.


The next lemma, whose proof can be found for instance in \cite{Uder09},
will be employed as well in the sequel.

\begin{lemma}       \label{lem:displsc}
Let $f:X\longrightarrow Y$ and $F:X\longrightarrow 2^Y$ be mappings
between metric spaces and let $\bar x\in X$. If $f$ is continuous
at $\bar x$ and $F$ is u.s.c. at the same point, then function
$d[f,F]:X\longrightarrow [0,+\infty]$ defined by
$$
   d[f,F](x)=\dist(f(x),F(x)),
$$
is l.s.c. at $\bar x$.
\end{lemma}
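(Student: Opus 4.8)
The plan is to unravel the definition of lower semicontinuity and reduce the statement to a single $\liminf$ estimate, and then to control the point-to-set distance by two elementary Lipschitz-type inequalities: one exploiting the continuity of $f$, the other the upper semicontinuity of $F$. Write $r=\pdisp(\bar x)=\dist(f(\bar x),F(\bar x))\in[0,+\infty]$. Since $\pdisp$ takes values in $[0,+\infty]$, proving l.s.c.\ at $\bar x$ amounts to showing
$$
    \liminf_{x\to\bar x}\pdisp(x)\ge r.
$$
The case $r=0$ is immediate from nonnegativity, so the real work lies in the case $r>0$.

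The two estimates I would isolate first are purely metric. For any point $a$ and any set $S$ the map $a\mapsto\dist(a,S)$ is $1$-Lipschitz, which gives
$$
    \dist(f(x),F(x))\ge\dist(f(\bar x),F(x))-d(f(x),f(\bar x)).
$$
Secondly, if $F(x)\subseteq\ball(F(\bar x),\eta)$, then every $y\in F(x)$ satisfies $\dist(y,F(\bar x))\le\eta$, and the triangle inequality for the point-to-set distance, $\dist(f(\bar x),F(\bar x))\le d(f(\bar x),y)+\dist(y,F(\bar x))$, yields, after passing to the infimum over $y\in F(x)$, the bound $\dist(f(\bar x),F(x))\ge r-\eta$. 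I would stress that this second inequality sidesteps any attainment of the infimum defining $\dist(y,F(\bar x))$, so that no completeness or compactness of $Y$ is needed.

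I would then assemble these on a common neighbourhood. Fix $\eta>0$. First note that the open $\eta$-enlargement of $F(\bar x)$ is an open set containing $F(\bar x)$, so upper semicontinuity of $F$ at $\bar x$ furnishes $\delta_1>0$ with $F(x)\subseteq\ball(F(\bar x),\eta)$ for all $x\in\ball(\bar x,\delta_1)$; continuity of $f$ at $\bar x$ furnishes $\delta_2>0$ with $d(f(x),f(\bar x))<\eta$ for all $x\in\ball(\bar x,\delta_2)$. Combining the two displayed estimates on $\ball(\bar x,\min\{\delta_1,\delta_2\})$ gives $\pdisp(x)\ge r-2\eta$ there, whence $\liminf_{x\to\bar x}\pdisp(x)\ge r-2\eta$. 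Letting $\eta\downarrow 0$ completes the argument.

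The only genuinely delicate point, and the one I would treat separately, is the degenerate case $r=+\infty$, which occurs exactly when $F(\bar x)=\varnothing$. Here the enlargement reasoning still applies: since $\ball(\varnothing,\eta)=\varnothing$, upper semicontinuity forces $F(x)=\varnothing$, hence $\pdisp(x)=+\infty$, on a whole neighbourhood of $\bar x$, so l.s.c.\ holds trivially. Apart from this case distinction the proof is routine; the main conceptual step is recognising that the crude $1$-Lipschitz behaviour of the distance function, combined with the enlargement form of upper semicontinuity, is all that the claim actually requires.
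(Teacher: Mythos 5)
Your proof is correct and complete. Note that the paper itself does not prove this lemma but defers to the reference \cite{Uder09}, so there is no in-text argument to compare against; your route --- the $1$-Lipschitz estimate $\dist(f(x),F(x))\ge\dist(f(\bar x),F(x))-d(f(x),f(\bar x))$ combined with the enlargement form of upper semicontinuity to get $\dist(f(\bar x),F(x))\ge\dist(f(\bar x),F(\bar x))-\eta$ --- is the standard one, and your separate handling of the case $F(\bar x)=\varnothing$ is a worthwhile precaution that such proofs often gloss over.
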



\section{Lipschitz lower semicontinuity of variational systems}    \label{sec:Liplsccriter}

In order to denote a function measuring displacements of the values
of $f$ from those of $F$, the following shortened notation is introduced
$$
   \pdisp(p,x)=\dist(f(p,x),F(p,x)).
$$
For function $\pdisp:P\times X\longrightarrow [0,+\infty]$,
a partial version of the strict outer slope, which will be employed
in the statement of the next result, is defined as follows
\begin{eqnarray*}
   \sostslx{\pdisp}(\bar p,\bar x) = \lim_{\epsilon\to 0^+}\inf
    \{\stslx{\pdisp}(p,x):\  & & x\in \ball(\bar x,\epsilon),\ 
   p\in \ball(\bar p,\epsilon),\  \\
   & &\pdisp(\bar p,\bar x)<\pdisp
  (p,x)\le\pdisp(\bar p,\bar x)+\epsilon\},
\end{eqnarray*}
where
$$
    \stslx{\pdisp}(p,x)=\left\{ \begin{array}{ll}
                       0, & \qquad\text{if $x$ is a local minimizer for $\pdisp(p,\cdot)$},  \\
                       \displaystyle\limsup_{z\to x} 
                       \frac{\pdisp(p,x)-\pdisp(p,z)}{d(z,x)}, & 
                       \qquad\text{otherwise.} 
                 \end{array}
   \right.
$$

One is now in a position to formulate the following sufficient
condition for Lischitz lower semicontinuity of variational systems,
which is valid in metric spaces.

\begin{theorem}      \label{thm:impLiplsc}
Let $f:P\times X\longrightarrow Y$ and $F:P\times X\longrightarrow
2^Y$ be mappings defining a problem $(\mathcal{GE}_p)$, with
solution mapping $G:P\longrightarrow 2^X$. Let $\bar p\in P$,
$\bar x\in G(\bar p)$ and $\bar y=f(\bar p,\bar x)$. Suppose that

(i) $(X,d)$ is metrically complete;

(ii) there exists $\delta_1>0$ such that, for every $p\in \ball(\bar p,
\delta_1)$, mapping $F(p,\cdot):X\longrightarrow 2^Y$ is u.s.c. at
each point of $\ball(\bar x,\delta_1)$;

(iii) there exist $\delta_2>0$ and $\l_F>0$ such that
$$
    F(p,\bar x)\cap \ball(\bar y,l_F d(p,\bar p))\ne\varnothing,\quad
    \forall p\in \ball(\bar p,\delta_2);
$$

(iv) there exists $\delta_3>0$ such that, for every $p\in \ball(\bar p,
\delta_3)$, $f(p,\cdot)$ is continuous at each point of $\ball(\bar x,
\delta_3)$;

(v) there exist $\delta_4>0$ and $\l_f>0$ such that
$$
     d(f(p,\bar x),f(\bar p,\bar x))\le l_f d(p,\bar p),\quad
    \forall p\in \ball(\bar p,\delta_4);
$$

(vi) it holds
\begin {eqnarray*}
    \sostslx\pdisp (\bar p,\bar x)>0.
\end{eqnarray*}

\noindent Then $G$ is Lipschitz l.s.c. at $(\bar p,\bar x)$.
\end{theorem}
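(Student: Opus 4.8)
The plan is to recast membership in $G(p)$ as a zero of the displacement function $\pdisp$ and then to manufacture such a zero, for each $p$ near $\bar p$, by means of Ekeland's variational principle, with the slope condition (vi) playing the role of the engine that drives the value of $\pdisp(p,\cdot)$ down to zero. Since $F$ takes closed values, one has $x\in G(p)$ precisely when $\pdisp(p,x)=0$. For $p$ fixed near $\bar p$, hypotheses (ii) and (iv) together with Lemma \ref{lem:displsc} ensure that $\pdisp(p,\cdot)$ is l.s.c.\ on a ball $\ball(\bar x,\rho)$ (with $\rho\le\min\{\delta_1,\delta_3\}$); being nonnegative it is bounded below there, and since a closed ball in the metrically complete space $X$ of hypothesis (i) is itself complete, Ekeland's principle is applicable in its localized form on that ball.

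First I would record the starting estimate $\pdisp(p,\bar x)\le(l_f+l_F)\,d(p,\bar p)$. Indeed, by (iii) there is $y_p\in F(p,\bar x)$ with $d(\bar y,y_p)\le l_F\,d(p,\bar p)$, while by (v) $d(f(p,\bar x),\bar y)\le l_f\,d(p,\bar p)$; the triangle inequality then gives $\dist(f(p,\bar x),F(p,\bar x))\le d(f(p,\bar x),y_p)\le (l_f+l_F)\,d(p,\bar p)$. Next, since $\bar x\in G(\bar p)$ forces $\pdisp(\bar p,\bar x)=0$, condition (vi) unwinds into a uniform slope bound: because the defining limit is a supremum of nondecreasing infima, its positivity yields $\epsilon_0>0$ and $m>0$ such that $\stslx{\pdisp}(p,x)\ge m$ whenever $d(p,\bar p)\le\epsilon_0$, $d(x,\bar x)\le\epsilon_0$ and $0<\pdisp(p,x)\le\epsilon_0$. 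Observe that this in particular forbids local minimizers of $\pdisp(p,\cdot)$ with positive value inside the region, since their slope would be $0$.

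Then I would fix $\tau$ with $0<\tau<m$ and apply Ekeland's principle to $\pdisp(p,\cdot)$ with tolerance $\epsilon:=(l_f+l_F)\,d(p,\bar p)$ and radius $\lambda:=\epsilon/\tau$, obtaining $z_p$ with $\pdisp(p,z_p)\le\pdisp(p,\bar x)$, $d(z_p,\bar x)\le\lambda=\frac{l_f+l_F}{\tau}\,d(p,\bar p)$, and $\stslx{\pdisp}(p,z_p)\le\tau$. Putting $l:=(l_f+l_F)/\tau$ already gives the distance bound $d(z_p,\bar x)\le l\,d(p,\bar p)$. It remains to show $z_p\in G(p)$. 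Choosing $\zeta>0$ not exceeding $\min\{\delta_1,\delta_2,\delta_3,\delta_4\}$ and small enough that $l\,d(p,\bar p)\le\min\{\epsilon_0,\rho\}$ and $(l_f+l_F)\,d(p,\bar p)\le\epsilon_0$ forces $z_p$ into the region where both the l.s.c.\ of $\pdisp(p,\cdot)$ and the uniform slope bound hold. If $\pdisp(p,z_p)>0$, the slope bound yields $\stslx{\pdisp}(p,z_p)\ge m$, contradicting $\stslx{\pdisp}(p,z_p)\le\tau<m$; hence $\pdisp(p,z_p)=0$, that is $z_p\in G(p)\cap\ball(\bar x,l\,d(p,\bar p))$. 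For $p=\bar p$ the reference point $\bar x$ itself serves, so Lipschitz lower semicontinuity holds with constants $\zeta$ and $l$.

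The main obstacle is the constant bookkeeping, specifically guaranteeing that the Ekeland point $z_p$ lands simultaneously inside the ball $\ball(\bar x,\rho)$ where $\pdisp(p,\cdot)$ is l.s.c.\ (so that the localized variational principle genuinely applies and $z_p$ lies in the interior, where the slope computed on the ball coincides with $\stslx{\pdisp}(p,z_p)$) and inside the neighborhood of (vi) where the slope is bounded below by $m$. The delicate tension is between the Ekeland radius $\lambda$, which must be large enough that the forced slope $\epsilon/\lambda=\tau$ falls below $m$, and the distance estimate $d(z_p,\bar x)\le\lambda$, which must stay $O(d(p,\bar p))$; the choice $\lambda=\epsilon/\tau$ with $\epsilon$ linear in $d(p,\bar p)$ is what reconciles the two and produces the linear bound demanded by Definition \ref{def:Liplscset}.
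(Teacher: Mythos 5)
Your proposal is correct and follows essentially the same route as the paper: the same preliminary estimate $\pdisp(p,\bar x)\le(l_f+l_F)\,d(p,\bar p)$ obtained from (iii) and (v), lower semicontinuity of $\pdisp(p,\cdot)$ on a complete ball via Lemma \ref{lem:displsc}, Ekeland's principle with radius $\lambda=(l_f+l_F)\,d(p,\bar p)/\tau$, and condition (vi) used to rule out a positive value at the Ekeland point. The only difference is presentational: the paper unwinds the definition of the strong slope to exhibit an explicit nearby point contradicting Ekeland minimality, whereas you phrase the same contradiction directly as $\stslx{\pdisp}(p,z_p)\le\tau<m$.
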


\begin{proof}
Let $\delta=\min\{\delta_1,\, \delta_2,\,\delta_3,\delta_4\}$, where
each $\delta_i$, with $i=1,2,3,4$,
is as stated in hypotheses (ii), (iii), (iv), and (v), respectively.
According to hypothesis (vi), it is possible to fix $c$ and $c'$ in
such a way that
$$
   0<c<c'<\sostslx\pdisp (\bar p,\bar x).
$$
Corresponding to $c'$, there exists $\delta_*\in (0,\delta)$ such
that
\begin{eqnarray}    \label{in:stslpdisp}
    \stslx\pdisp (p,x)>c',\quad\forall p\in\ball(\bar p,\delta_*),\
    \forall x\in\ball(\bar x,\delta_*) \hbox{ with } 0<\pdisp(p,x)
    \le\delta_*.
\end{eqnarray} 
This amounts to say that for every $(p,x)\in\ball(\bar p,\delta_*)
\times\ball(\bar x,\delta_*)$, with $f(p,x)\not\in F(p,x)$ and $\pdisp
(p,x)\le\delta_*$, and for every $\eta>0$ an element $x_\eta\in
\ball(x,\eta)$ must exist such that
$$
   \dist(f(p,x),F(p,x))>\dist(f(p,x_\eta),F(p,x_\eta))+c'd(x_\eta,x).
$$
Now, choose a positive $\zeta_c$ in such a way that
$$
    \zeta_c<\min\left\{\frac{1}{2},\frac{c}{2 (l_F+l_f+1)},
    \frac{1}{l_F+l_f+1}\right\}\delta_*,
$$
and fix $p\in \ball(\bar p,\zeta_c)\backslash\{\bar p\}$. By virtue
of hypothesis (iii), being $\zeta_c<\delta_2$, it holds
$$
    F(p,\bar x)\cap \ball(\bar y,l_F\, d(p,\bar p))\ne\varnothing.
$$
This means that there is $v\in F(p,\bar x)$ such that
\begin{eqnarray}    \label{in:vbary}
     d(v,\bar y)\le l_F\, d(p,\bar p).
\end{eqnarray}
Notice that, being $\zeta_c<\delta_1$ and $\zeta_c<\delta_3$,
$f(p,\cdot)$ is continuous on $\ball(\bar x,\delta_*)$ as well as
$F(p,\cdot)$ is u.s.c. on the same subset.
Thus, according to Lemma \ref{lem:displsc}, if restricted to the
complete metric space  $\ball(\bar x,\delta_*)$, the displacement
function $\pdisp(p,\cdot)$ turns out to be l.s.c.. Moreover, by
virtue of hypotheis (v) and inequality (\ref{in:vbary}), being
$\zeta_c<\delta_4$, one has
\begin{eqnarray*}
    \pdisp(p,\bar x) &=& \inf_{w\in F(p,\bar x)}d(f(p,\bar x),w)\le
    d(f(p,\bar x),\bar y)+d(\bar y,v)\le (l_f+l_F)d(p,\bar p) \\
    &\le &\inf_{x\in\ball(\bar x,\delta_*)}\pdisp(p,x)+(l_f+l_F)d(p,\bar p).
\end{eqnarray*}
By invoking the Ekeland variational principle, with  $\lambda=
\frac{l_f+l_F}{c'}d(p,\bar p)$, one gets the existence of $\hat x\in
\ball(\bar x,\delta_*)$, such that
\begin{eqnarray}     \label{in:evpbis1}
    \pdisp(p,\hat x)=\dist(f(p,\hat x),F(p,\hat x))\le\pdisp(p,\bar x)\le
    (l_f+l_F)\, d(p,\bar p),
\end{eqnarray}
\begin{eqnarray}     \label{in:evpbis2}
     d(\hat x,\bar x)\le\frac{l_f+l_F}{c'}d(p,\bar p)
    \le\frac{l_f+l_F}{c'}\zeta_c<
     \frac{c}{2c'}\delta_*<\frac{\delta_*}{2},
\end{eqnarray}
and
\begin{eqnarray}     \label{in:evpbis3}
     \pdisp(p,\hat x)<\pdisp(p,x)+c'd(x,\hat x),\quad\forall x\in\ball
    (\bar x,\delta_*)\backslash\{\hat x\}.
\end{eqnarray}
Inequality (\ref{in:evpbis3}) allows to say that
\begin{eqnarray}      \label{inc:hatxFphatx}
      f(p,\hat x)\in F(p,\hat x).
\end{eqnarray}
Indeed, assume the contrary. Since in particular $\hat x\in
\ball(\bar x,\delta_*)$ and $p\in \ball(\bar p,\delta_*)$ and, by the last
inequality in (\ref{in:evpbis1}), it is $0<\pdisp(p,\hat x)<\delta_*$,
then one is enabled to apply what has been obtained as a consequence
of (\ref{in:stslpdisp}).
Accordingly, corresponding to $\eta=\delta_*/2$, an element $x_\eta\in
\ball(\hat x,\delta_*/2)$ must exist such that
\begin{eqnarray}     \label{in:contrevpbis3}
   \dist(f(p,\hat x),F(p,\hat x))>d(f(p,x_\eta),F(p,x_\eta))
   +c'd(x_\eta,\hat x).
\end{eqnarray}
Notice that, since
$$
    d(x_\eta,\bar x)\le d(x_\eta,\hat x)+d(\hat x,\bar x)<\frac{\delta_*}{2}+
    \frac{\delta_*}{2}=\delta_*,
$$
then $x_\eta\in\ball(\bar x,\delta_*)$. In the light of this, inequality (\ref{in:contrevpbis3})
becomes inconsistent with (\ref{in:evpbis3}), thereby validating
inclusion (\ref{inc:hatxFphatx}). It follows that $\hat x\in G(p)$. 
Thus, being $\hat x\in \ball(\bar x,\frac{l_f+l_F}{c}\, d(p,\bar p))$ by
virtue of (\ref{in:evpbis2}), it has been shown that
$$
    \hat x\in G(p)\cap\ball\left(\bar x,\frac{l_f+l_F}{c} d(p,\bar p)\right)
    \ne\varnothing.
$$
The proof is now complete.
\end{proof}

On the base of Remark \ref{rem:Liplsccalmgen}, the reader should
notice that Theorem \ref{thm:impLiplsc} guarantees, in particular,
nonempty-valuedness of variational systems near a point of interest.
This fact seems to be notable in consideration of the bizarre geometry
of solution mappings to parameterized generalized equations,
whose graphs often exhibit ``corner configurations": near them
$G$ passes from multivaluedness for some values of $p$ to
emptiness, for some others.


\section{Calmness of variational systems}     \label{sec:calmcriter}

In order to study the calmness property of variational systems,
a further displacement function $\disp:X\times Y\longrightarrow
\R\cup\{\pm\infty\}$, defined as
$$
    \disp(x,y)=d(f(\bar p,x),y)+\iota((x,y),\grph F(\bar p,\cdot)),
$$
is exploited. It enters the statement of the next result.

\begin{theorem}       \label{thm:impcalm}
Let $f:P\times X\longrightarrow Y$ and $F:P\times X\longrightarrow 2^Y$ be
mappings defining a problem $(\mathcal{GE}_p)$, with solution
mapping $G:P\longrightarrow 2^X$. Let $\bar p\in P$, $\bar x\in
G(\bar p)$ and $\bar y=f(\bar p,\bar x)$. Suppose that:

(i) $(X,d)$ and $(Y,d)$ are metrically complete;

(ii) there exists $\delta_1>0$ such that
$[\ball(\bar x,\delta_1)\times\ball(\bar y,\delta_1)]\cap
    \grph F(\bar p,\cdot)$
is a closed subset of $X\times Y$;

(iii) there exist $\delta_2>0$ and $\ell_F>0$ such that
$$
    F(p,x)\subseteq \ball\left(F(\bar p,x),\ell_F d(p,\bar p)\right),
    \quad\forall x\in \ball(\bar x,\delta_2),\ 
    \forall p\in \ball(\bar p,\delta_2);
$$

(iv) there exist $\delta_3>0$ such that $f(\bar p,\cdot)$ is
continuous at each point of $\ball(\bar x,\delta_3)$;

(v) there exist $\delta_4>0$ and $\ell_f>0$ such that
$$
    d(f(p,x),f(\bar p,x))\le \ell_f d(p,\bar p),\quad\forall
   x\in \ball(\bar x,\delta_4),\ 
    \forall p\in \ball(\bar p,\delta_4);
$$

(vi) it holds
\begin {eqnarray*}
    \sostsl\disp (\bar x,\bar y)>0.
\end{eqnarray*}

\noindent Then $G$ is calm at $(\bar p,\bar x)$ and the following
estimation holds
$$
    \calm G(\bar p,\bar x)\le\frac{\ell_f+\ell_F}{\sostsl\disp
   (\bar x,\bar y)}.
$$
\end{theorem}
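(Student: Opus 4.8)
The plan is to mimic the scheme of Theorem~\ref{thm:impLiplsc}, but now running the Ekeland variational principle on the displacement function $\disp$ over the product space $X\times Y$ (equipped with the sum metric), whose \emph{zeros} are precisely the solutions we are chasing. First observe that, since $(x,y)\in\grph F(\bar p,\cdot)$ is forced by finiteness of the indicator term, one has $\disp(x,y)=d(f(\bar p,x),y)$ on $\grph F(\bar p,\cdot)$ and $\disp(x,y)=+\infty$ off it; in particular $\disp(x,y)=0$ iff $y=f(\bar p,x)\in F(\bar p,x)$, i.e. iff $x\in G(\bar p)$ and $y=f(\bar p,x)$. Since $\bar x\in G(\bar p)$ and $\bar y=f(\bar p,\bar x)$, this gives $\disp(\bar x,\bar y)=0$. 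To establish calmness it then suffices to produce $\delta,\zeta>0$ so that, for every $p\in\ball(\bar p,\zeta)$ and every $x\in G(p)\cap\ball(\bar x,\delta)$, some $\hat x\in G(\bar p)$ satisfies $d(x,\hat x)\le\frac{\ell_f+\ell_F}{c'}\,d(p,\bar p)$, with $c'$ an arbitrary constant below $\sostsl\disp(\bar x,\bar y)$.

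So fix such $p$ and $x$, and put $y'=f(p,x)$, so that $y'\in F(p,x)$. The second step is an a priori bound on $\disp$ near $(x,\cdot)$. By hypothesis~(iii) there is $w\in F(\bar p,x)$ with $d(y',w)\le\ell_F\,d(p,\bar p)$; then $(x,w)\in\grph F(\bar p,\cdot)$ and, combining the triangle inequality with hypothesis~(v),
$$
   \disp(x,w)=d(f(\bar p,x),w)\le d(f(\bar p,x),y')+d(y',w)\le(\ell_f+\ell_F)\,d(p,\bar p).
$$
Next I would restrict $\disp$ to the closed ball $M=\ball(\bar x,\delta_1)\times\ball(\bar y,\delta_1)$: by~(i) this is a complete metric space, and by~(ii) and~(iv) the function $\disp$ is l.s.c. on $M$ (the term $d(f(\bar p,\cdot),\cdot)$ is continuous and the indicator of the locally closed set $\grph F(\bar p,\cdot)$ is l.s.c.). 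Since $\disp\ge0$ and $(x,w)\in M$ for $\delta,\zeta$ small (here~(iv) and~(v) keep $w$ within $\ball(\bar y,\delta_1)$), the above bound reads $\disp(x,w)\le\inf_M\disp+(\ell_f+\ell_F)\,d(p,\bar p)$. Applying the Ekeland variational principle with $\lambda=\frac{\ell_f+\ell_F}{c'}\,d(p,\bar p)$ yields $(\hat x,\hat y)\in M$ with $\disp(\hat x,\hat y)\le\disp(x,w)$, with $d(x,\hat x)+d(w,\hat y)\le\frac{\ell_f+\ell_F}{c'}\,d(p,\bar p)$, and with $(\hat x,\hat y)$ minimizing $\disp+c'\,d(\cdot,(\hat x,\hat y))$ over $M$.

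The decisive step is to show $\disp(\hat x,\hat y)=0$. Using hypothesis~(vi) fix $\epsilon^*>0$ so that $\stsl\disp(u,v)>c'$ whenever $(u,v)\in\ball((\bar x,\bar y),\epsilon^*)$ and $0<\disp(u,v)\le\epsilon^*$. A routine bookkeeping on the constants --- shrinking $\delta$ and $\zeta$ and invoking~(iv),~(v) and the Ekeland displacement bound --- forces $(\hat x,\hat y)$ into the interior of both $M$ and $\ball((\bar x,\bar y),\epsilon^*)$ with $\disp(\hat x,\hat y)\le\epsilon^*$. If $\disp(\hat x,\hat y)$ were strictly positive, then $(\hat x,\hat y)$ would not be a local minimizer and the strong-slope inequality would furnish points $(u,v)\in M$ arbitrarily close to $(\hat x,\hat y)$ with $\disp(u,v)<\disp(\hat x,\hat y)-c'\,d((u,v),(\hat x,\hat y))$, contradicting the Ekeland minimality of $(\hat x,\hat y)$. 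Hence $\disp(\hat x,\hat y)=0$, i.e. $\hat x\in G(\bar p)$, and the Ekeland displacement bound gives $\dist(x,G(\bar p))\le d(x,\hat x)\le\frac{\ell_f+\ell_F}{c'}\,d(p,\bar p)$. As $x$ and $p$ were arbitrary in the two balls, $G$ is calm with constant $\frac{\ell_f+\ell_F}{c'}$, and letting $c'\uparrow\sostsl\disp(\bar x,\bar y)$ produces the announced modulus estimate. I expect the main difficulty to be exactly this last paragraph: keeping the Ekeland point simultaneously inside $M$ (where $\disp$ is l.s.c.) and inside the neighbourhood on which~(vi) delivers the uniform slope bound $c'$, so that the strong-slope/Ekeland contradiction can legitimately be run.
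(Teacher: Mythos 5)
Your proposal is correct and follows essentially the same route as the paper: an a priori bound on $\disp$ obtained from hypotheses (iii) and (v), the Ekeland variational principle applied to $\disp$ on a complete product ball, and a contradiction with the uniform slope bound from (vi) to force the Ekeland point to be a zero of $\disp$, hence a point of $G(\bar p)$. The only technical refinement the paper adds is selecting $w$ with $d(f(p,z),w)\le(1+\epsilon)\ell_F\, d(p,\bar p)$ (since the distance to $F(\bar p,x)$ need not be attained) and then absorbing $\epsilon$ into the gap between $c$ and $c'$, which you would need as well but which does not alter the argument or the final modulus estimate.
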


\begin{proof}
To prove both the assertions in the thesis it suffices to show
that for every positive $c$, with $c<\sostsl\disp (\bar x,\bar y)$
there exists $\zeta_c>0$ with the property
\begin{eqnarray}       \label{inc:Gcalm}
   G(p)\cap \ball(\bar x,\zeta_c)\subseteq \ball\left(G(\bar p),
    \frac{\ell_f+\ell_F}{c}\, d(p,\bar p)\right),\quad
   \forall p\in \ball(\bar p,\zeta_c).
\end{eqnarray}
So fix $c$ and $c'$ in such a way that
$$
   0<c<c'<\sostsl\disp (\bar x,\bar y).
$$
Set $\delta=\min\{\delta_1,\,\delta_2,\,\delta_3,\,\delta_4\}$. Then, by virtue
of hypothesis (vi) there exists $\delta_*\in (0,\delta)$ such that
$$
    \stsl\disp(x,y)>c',\quad\forall (x,y)\in [\ball(\bar x,\delta_*)\times 
     \ball(\bar y,\delta_*)]\cap\grph F(\bar p,\cdot)\hbox{ with }
        y\ne f(\bar p,x).
$$
This means, in particular, that for every $(x,y)\in [\ball(\bar x,
\delta_*)\times \ball(\bar y,\delta_*)]\cap\grph F(\bar p,\cdot)$
with $y\ne f(\bar p,x)$ and $\eta>0$ there is $(x_\eta,y_\eta)\in
[\ball(x,\eta)\times \ball(y,\eta)]
\cap\grph F(\bar p,\cdot)$ such that
\begin{eqnarray*}
     d(f(\bar p,x),y)>d(f(\bar p,x_\eta),y_\eta)+c'd((x_\eta,y_\eta),(x,y)).
\end{eqnarray*}
According to hypothesis (iv), corresponding to $\delta_*/16$ there
exists $\tilde\delta>0$ such that
\begin{eqnarray*}
   d(f(\bar p,z),\bar y)<\frac{\delta_*}{16},\quad\forall z\in
   \ball(\bar x,\tilde\delta).
\end{eqnarray*}
Now, take a positive $\zeta_c$ as follows
\begin{eqnarray}     \label{in:defzetac}
     \zeta_c<\min\left\{\frac{\delta_*}{16},\, \frac{c\delta_*}{2(\ell_f+2\ell_F+1)},\,
     \frac{\delta_*}{16(\ell_F+1)},\, \frac{\delta_*}{16(\ell_f+1)},
     \, \frac{\delta_*}{\ell_f+2\ell_F+1},\,\tilde\delta\right\}
\end{eqnarray}
and fix $p\in\ball(\bar p,\zeta_c)\backslash\{\bar p\}$. If $G(p)
\cap \ball(\bar x,\zeta_c)=\varnothing$, then inclusion (\ref{inc:Gcalm})
trivially holds true. Otherwise, take an arbitrary $z\in G(p)
\cap\ball(\bar x,\zeta_c)$. Then
$$
    d(z,\bar x)\le\zeta_c
$$
and
\begin{eqnarray}     \label{inc:zsolpGE}
   f(p,z)\in F(p,z).
\end{eqnarray}
Notice that, due to position (\ref{in:defzetac}), it is $z\in\ball(\bar x,\delta_2)$
and $p\in\ball(\bar p,\delta_2)$ so, on the account of hypothesis (iii),
one obtains from inclusion (\ref{inc:zsolpGE})
$$
   f(p,z)\in\ball(F(\bar p,z),\ell_F d(p,\bar p)).
$$
The above inclusion entails the existence of $w\in F(\bar p,z)$
such that
$$
   d(f(p,z),w)\le (1+\epsilon)\ell_F d(p,\bar p),
$$
where $\epsilon$ can be assumed to fulfil the inequalities
\begin{eqnarray}     \label{in:epsmin}
   0<\epsilon<\min\left\{1,\frac{(c'-c)(\ell_f+\ell_F)}{c\ell_F}
    \right\}.
\end{eqnarray}
Thus, in the light of position (\ref{in:defzetac}), it results in
$$
   d(f(p,z),w)<2\ell_F\zeta_c<\frac{\delta_*}{8}.
$$
Since in particular $z\in \ball(\bar x,\delta_4)$ and $z\in
\ball(\bar x,\tilde\delta)$, it follows
\begin{eqnarray}       \label{in:wbary}
   d(w,\bar y)\le d(w,f(p,z))+d(f(p,z),f(\bar p,z))+d(f(\bar p,z),\bar y)
  <\frac{\delta_*}{8}+ \frac{\delta_*}{16}+\frac{\delta_*}{16}
  =\frac{\delta_*}{4}
\end{eqnarray}
and hence, a fortiori, it turns out that $(z,w)\in\ball(\bar x,\delta_*)
\times\ball(\bar y,\delta_*)$. Therefore, if restricting function $\disp$ to the
complete metric space $\ball(\bar x,\delta_*)\times\ball
(\bar y,\delta_*)$, one obtains 
\begin{eqnarray*}
    \disp(z,w) &\le & d(f(\bar p,z),f(p,z))+d(f(p,z),w)\le(\ell_f+(1+\epsilon)\ell_F)
     d(p,\bar p)    \\ 
    & \le &  \inf_{(x,y)\in \ball(\bar x,\delta_*)\times \ball(\bar y,\delta_*)}
    \disp(x,y)+(\ell_f+(1+\epsilon)\ell_F) d(p,\bar p)
\end{eqnarray*}
(recall that $(z,w)\in\grph F (\bar p,\cdot)$). Since $f(\bar p,
\cdot)$ is continuous
on $\ball(\bar x,\delta_*)$ and $[\ball(\bar x,\delta_*)\times 
\ball(\bar y,\delta_*)]\cap\grph F(\bar p,\cdot)$ is closed by
virtue of hypothesis (ii), function
$\disp$ turns out to be l.s.c. on $\ball(\bar x,\delta_*)\times 
\ball(\bar y,\delta_*)$. It is then possible to apply the Ekeland
variational principle, according to which, corresponding
to 
$$
    \lambda=\frac{\ell_f+(1+\epsilon)\ell_F}{c'}d(p,\bar p),
$$
there exists $(\hat x,\hat y)\in \ball(\bar x,\delta_*)\times
\ball(\bar y,\delta_*)$ such that
\begin{eqnarray}    \label{in:evp1}
   \disp(\hat x,\hat y)\le\disp(z,w)\le [\ell_f+(1+\epsilon)\ell_F]d(p,\bar p),
\end{eqnarray}
\begin{eqnarray}    \label{in:evp2}
    d((\hat x,\hat y),(z,w))\le\frac{\ell_f+(1+\epsilon)\ell_F}{c'}d(p,\bar p)<
    \frac{\ell_f+2\ell_F}{c'}\zeta_c<\frac{c\delta_*}{2c'}<\frac{\delta_*}{2},
\end{eqnarray}
and
\begin{eqnarray}    \label{in:evp3}
   \disp(\hat x,\hat y) &<& \disp(x,y)+c'd((x,y),(\hat x,\hat y)), \\
   & &\forall (x,y)\in
   [\ball(\bar x,\delta_*)\times \ball (\bar y,\delta_*)]
   \backslash\{(\hat x,\hat y)\}.   \nonumber
\end{eqnarray}
Notice that, since it is $\disp(\hat x,\hat y)<+\infty$ as a consequence
of (\ref{in:evp1}), it must be $\hat y\in F(\bar p,\hat x)$, and
hence it results in
$$
    \disp(\hat x,\hat y)=d(f(\bar p,\hat x),\hat y).
$$
Consequently, inequality (\ref{in:evp3}) takes the form
\begin{eqnarray}    \label{in:evp3bis}
   d(f(\bar p,\hat x),\hat y) < d(f(\bar p,x),y)+c'd((x,y),(\hat x,\hat y)),
    \quad\forall (x,y)\in
   [(\ball (\bar x,\delta_*)\times \ball (\bar y,\delta_*))
   \backslash\{(\hat x,\hat y)\}]\cap\grph F(\bar p,\cdot).
\end{eqnarray}
Moreover, being $\zeta_c<\frac{\delta_*}{\ell_f+2\ell_F+1}$,
one finds
$$
     \disp(\hat x,\hat y)=d(f(\bar p,\hat x),\hat y)\le
      [\ell_f+(1+\epsilon)\ell_F]d(p,\bar p)<\delta_*.
$$
The last inequalities lead to conclude that
\begin{eqnarray}       \label{inc:hatxsolpGE}
       f(\bar p,\hat x)\in F(\bar p,\hat x).
\end{eqnarray}
Indeed, assume ab absurdo that $f(\bar p,\hat x)\not\in F(\bar p,\hat x)$
and hence $\hat y\ne f(\bar p,\hat x)$. Since, by inequalities (\ref{in:evp2})
and (\ref{in:wbary}) it holds
$$
    d(\hat x,\bar x)\le d(\hat x,z)+d(z,\bar x)<\frac{\delta_*}{2}+
    \frac{\delta_*}{16}<\frac{3}{4}\delta_*
$$
and
$$
    d(\hat y,\bar y)\le d(\hat y,w)+d(w,\bar y)<\frac{\delta_*}{2}+
    \frac{\delta_*}{4}=\frac{3}{4}\delta_*,
$$
actually it is $(\hat x,\hat y)\in \ball(\bar x,\delta_*)\times
\ball(\bar y,\delta_*)$. Moreover $(\hat x,\hat y)\in\grph
F(\bar p,\cdot)$, so the consequence of hypothesis (vi) applies:
corresponding to $\eta=\delta_*/4$ an element $(x_\eta,y_\eta)
\in [\ball(\hat x,\delta_*/4)\times \ball(\hat y,\delta_*/4)]\cap\grph
 F(\bar p,\cdot)$ must exist such that
\begin{eqnarray}    \label{in:contrevp3bis}
    d(f(\bar p,\hat x),\hat y)>d(f(\bar p,x_\eta),y_\eta)+c'd((x_\eta,y_\eta),
    (\hat x,\hat y)).
\end{eqnarray}
Since
$$
     d(x_\eta,\bar x)\le d(x_\eta,\hat x)+d(\hat x,\bar x)<
     \frac{\delta_*}{4}+\frac{3}{4}\delta_*=\delta_*
$$
and
$$
     d(y_\eta,\bar y)\le d(y_\eta,\hat y)+d(\hat y,\bar y)<
     \frac{\delta_*}{4}+\frac{3}{4}\delta_*=\delta_*,
$$
then the existence of $(x_\eta,y_\eta)\in [(\ball(\bar x,\delta_*)
\times \ball(\bar y,\delta_*))\backslash\{(\hat x,\hat y)\}]\cap
\grph F(\bar p,\cdot)$
fulfilling inequality (\ref{in:contrevp3bis}) clearly contradicts
(\ref{in:evp3bis}).

Thus, inclusion (\ref{inc:hatxsolpGE})  means that
$\hat x\in G(\bar p)$. The fact that, according to the first
inequality in (\ref{in:evp2}) and (\ref{in:epsmin}), it holds
$$
   d(z,\hat x)\le\frac{(\ell_f+(1+\epsilon)\ell_F)}{c'}\, d(p,\bar p)
   <\frac{(\ell_f+\ell_F)}{c}d(p,\bar p),
$$
shows that $z\in\ball\left(G(\bar p),\frac{(\ell_f+\ell_F)}{c}
d(p,\bar p)\right)$, so that inclusion (\ref{inc:Gcalm}) appears
to be satisfied. This completes the proof.
\end{proof}

It has been remarked by several authors (see, for instance,
\cite{HenOut05}) that existent conditions for calmness actually
imply Aubin continuity. The following example shows that this
is not true for the condition proposed in Theorem \ref{thm:impcalm},
which thereby turns out to be a specific tool for detecting
calmness in circumstances where its stronger variant fails.

\begin{example}
Let $P=X=Y=\R$ be endowed with its usual metric structure. Consider
the variational system associated with the parameterized generalized
equation having base $f\equiv 0$ and field $F(p,x)=\{y\in\R:\ |y|\ge
|px|\}$. By explicitly resolving the inclusion $0\in F(p,x)$ one readily
finds
$$
    G(p)=\left\{\begin{array}{ll}
             \R, & \hbox{if } p=0, \\
             \{0\}, & \hbox{otherwise.}
         \end{array}\right.  
$$
So, choosing $\bar p=\bar x=0$ as reference points, one has
$\bar x\in G(\bar p)$.
One gets $\grph F(0,\cdot)=\R\times\R$, which is a closed set.
Besides, being polyhedral, mapping $F$ is upper Lipschitz
(uniformly in $x$) at $0$, with any $\ell_F>0$. Indeed, trivially
it holds
$$
    F(p,x)\subseteq\R=\ball(\R,0\cdot |p|)=\ball(F(0,x),\ell_Fd(p,0)),
    \quad\forall p\in\R,\ \forall x\in\R.
$$
Since, in the case under examination, it is
$$
    \disp(x,y)=d(0,y)+\iota((x,y),\grph F(0,\cdot))=|y|,
$$
one obtains that $0<\disp(x,y)\le\delta$ iff
$(x,y)\in\ball((0,0),\delta)\backslash\{(x,y)\in\R\times\R:\ y=0\}$.
As one checks at once, whenever $(x,y)\in \ball((0,0),\delta)
\backslash\{(x,y)\in\R\times\R:\ y=0\}$ it results in
$$
     \stsl{d(0,F(0,\cdot)}(x,y)=1,
$$
and therefore $\sostsl{d(0,F(0,\cdot)}(0,0)=1>0$. According to
Theorem \ref{thm:impcalm} the variational system $G$ is calm at
$(0,0)$ with $\calm G(0,0)=0$, what one can verify by using the
definition of $G$ explicitly derived. By direct inspection, it is
not difficult to see that $G$ fails to be Aubin continuous at
$(0,0)$.
\end{example}

Whenever $X=\X$ and $Y=\Y$ are Asplund spaces, from 
Theorem \ref{thm:impcalm} it is possible to derive an implicit
multifunction theorem establishing the calmness of variational
systems associated to parameterized generalized equations
with constant (null) base, i.e. having the special form
$$
    \nullv\in F(p,x),    \leqno(\mathcal{GE}^0_p),
$$
under a non degeneration condition on the Fr\'echet coderivative
of $F$. Such result seems to be rather close to the one achieved in \cite{ChKrYa11}
(see Theorem 3.1 therein). To see this fact in detail, given a solution
$\bar x\in G(\bar p)$, with $\bar p\in P$, let us set
\begin{eqnarray*}
   c[F(\bar p,\cdot)](\bar x,\nullv)=\lim_{\epsilon\to 0^+}\inf
   \{  \|x^*\|:\ x^*\in\Coder F(\bar p,\cdot)(x,y)(y^*),\ 
       x\in\ball(\bar x,\epsilon),\ y\in\ball(\nullv,\epsilon)
               \backslash\{\nullv\}, \\
        (x,y)\in\grph F(\bar p,\cdot),\  \|y^*\|=1\}.
\end{eqnarray*}

\begin{proposition}     \label{pro:calmAspcod}
Let $F:P\times\X\longrightarrow 2^\Y$ be a set-valued mapping defining
a problem $(\mathcal{GE}^0_p)$, with solution mapping $G:P
\longrightarrow 2^\X$, and let $\bar x\in G(\bar p)$. Suppose that:

(i) $(\X,\|\cdot\|)$ and $(\Y,\|\cdot\|)$ are Asplund;

(ii) the graph of $F(\bar p,\cdot):\X\longrightarrow 2^\Y$ is closed
in a neighbourhood of $(\bar x,\nullv)$;

(iii) there exist $\delta>0$ and $\ell>0$ such that
$$
    F(p,x)\subseteq\ball(F(\bar p,x),\ell d(p,\bar p)),\quad
    \forall x\in\ball(\bar x,\delta),\quad\forall p\in\ball
    (\bar p,\delta);
$$

(iv) it holds
\begin{eqnarray}     \label{in:condcoder}
    c[F(\bar p,\cdot)](\bar x,\nullv)>0.
\end{eqnarray}
Then, $G$ is calm at $(\bar p,\bar x)$.
\end{proposition}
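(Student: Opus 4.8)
The plan is to deduce the statement from Theorem \ref{thm:impcalm} by specializing it to the constant null base $f\equiv\nullv$ and then translating its metric condition (vi) into the coderivative condition (iv) through the Asplund-space machinery of Remark \ref{rem:stsllipder} and the fuzzy sum rule (Lemma \ref{lem:fuzsumrule}). First I would record that here $\bar y=f(\bar p,\bar x)=\nullv$, so the displacement function reduces to $\disp(x,y)=\|y\|+\iota((x,y),\grph F(\bar p,\cdot))$, with $\disp(\bar x,\nullv)=0$ since $\bar x\in G(\bar p)$ forces $(\bar x,\nullv)\in\grph F(\bar p,\cdot)$. I would then check that hypotheses (i)--(v) of Theorem \ref{thm:impcalm} all hold: completeness follows from the Asplund (hence Banach) structure; the local closedness (ii) is hypothesis (ii) here; the uniform upper Lipschitz estimate (iii) is hypothesis (iii); and for $f\equiv\nullv$ both the continuity (iv) and the parameter-Lipschitz estimate (v) hold trivially, the latter with $\ell_f=0$. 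Thus everything reduces to verifying condition (vi), namely $\sostsl\disp(\bar x,\nullv)>0$.

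The heart of the argument is this verification. Since $\disp$ is l.s.c. near $(\bar x,\nullv)$ (sum of the continuous map $(x,y)\mapsto\|y\|$ and the indicator of a set closed near the point), Remark \ref{rem:stsllipder}(ii) lets me replace the strict outer slope by the strict outer subdifferential slope, so it suffices to bound $\soesubdifsl{\disp}(\bar x,\nullv)$ away from zero. Accordingly I would take any $(x,y)\in\grph F(\bar p,\cdot)$ close to $(\bar x,\nullv)$ with $0<\|y\|\le\epsilon$ and any $\xi\in\fsubdif\disp(x,y)$, and estimate $\|\xi\|$ from below in the max-norm dual to the sum metric. Applying Lemma \ref{lem:fuzsumrule} with $\psi_1(x,y)=\|y\|$ (globally Lipschitz with constant $1$) and $\psi_2=\iota(\cdot,\grph F(\bar p,\cdot))$, for each $\eta>0$ I obtain nearby points and a decomposition $\xi=(\nullv^*,b^*)+(u^*,v^*)+\eta e^*$, where $b^*$ lies in the Fr\'echet subdifferential of $\|\cdot\|$ at some $y_1$ and $(u^*,v^*)\in\Normal((x_2,y_2),\grph F(\bar p,\cdot))$. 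Choosing $\eta<\|y\|/2$ forces $y_1,y_2\ne\nullv$, so $b^*$ is a norming functional with $\|b^*\|=1$, and by the normal-cone/coderivative link $u^*\in\Coder F(\bar p,\cdot)(x_2,y_2)(-v^*)$ with $y_2\ne\nullv$.

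Finally I would run a case split on the size of $\|v^*\|$ against the threshold $t=1/(1+\gamma)$, where $0<\gamma<c[F(\bar p,\cdot)](\bar x,\nullv)$ is the uniform lower bound supplied by (iv) for $\epsilon$ small. If $\|v^*\|\le t$, the $\Y^*$-component $b^*+v^*+\eta e^*$ has norm at least $1-\|v^*\|-\eta\ge 1-t-\eta$; if $\|v^*\|>t$, positive homogeneity of the coderivative applied to the unit vector $-v^*/\|v^*\|$ together with (iv) gives $\|u^*\|\ge\gamma\|v^*\|$, so the $\X^*$-component has norm at least $\gamma t-\eta$. Balancing via $t=1/(1+\gamma)$ yields $\|\xi\|\ge\frac{\gamma}{1+\gamma}-\eta$, and letting $\eta\to0$ (legitimate since $\eta$ is at our disposal for each fixed point) gives $\|\xi\|\ge\frac{\gamma}{1+\gamma}$. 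Hence $\sostsl\disp(\bar x,\nullv)\ge\frac{\gamma}{1+\gamma}>0$, and Theorem \ref{thm:impcalm} delivers the calmness of $G$ at $(\bar p,\bar x)$.

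I expect the main obstacle to be the bookkeeping in this last step: keeping the two dual components separate, normalizing $v^*$ through the positive homogeneity of the coderivative, and---most delicately---ensuring that the fuzzy-sum-rule points $(x_1,y_1),(x_2,y_2)$ both remain in the region where (iv) provides the uniform bound $\gamma$ and satisfy $y_1,y_2\ne\nullv$, which forces $\eta$ to be chosen point-by-point small relative to $\|y\|$.
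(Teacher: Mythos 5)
Your proposal is correct and follows essentially the same route as the paper's own proof: reduction to Theorem \ref{thm:impcalm}, the subdifferential representation of the strict outer slope from Remark \ref{rem:stsllipder}(ii), the fuzzy sum rule applied to $\|y\|+\iota(\cdot,\grph F(\bar p,\cdot))$, and a two-case estimate exploiting the positive homogeneity of the coderivative. The only differences are cosmetic (you split on the size of the normal-cone component $v^*$ rather than on the full $\Y^*$-component $y^*$, and you use the unweighted max norm where the paper uses a $(1-c)^{-2}$-weighted one), yielding the lower bound $\gamma/(1+\gamma)$ in place of the paper's $c/2$.
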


\begin{proof}
The thesis follows as a consequence of Theorem \ref{thm:impcalm},
upon having proved that condition (\ref{in:condcoder}) implies
$$
   \sostsl\dispnull (\bar x,\nullv)>0.
$$
According to the definition of strict outer slope and to its subdifferential
representation (see Remark \ref{rem:stsllipder}(ii)), one has
\begin{eqnarray*}
    \sostsl\dispnull (\bar x,\nullv)=\lim_{\epsilon\to 0^+}\inf\{
    \|(x^*,y^*)\|:\ (x^*,y^*)\in\fsubdif\dispnull (x,y),
    \ (x,y)\in \ball (\bar x,\epsilon)\times\ball (\nullv,\epsilon), \\
   0<\|y\|+\iota((x,y),\grph F(\bar p,\cdot))\le\epsilon\}.
\end{eqnarray*}
By virtue of the hypothesis (iv), it is possible to find $c\in\R$
such that
$$
   0<c<\min\left\{1,c[F(\bar p,\cdot)](\bar x,\nullv) \right\}.
$$
By definition of $c[F(\bar p,\cdot)](\bar x,\nullv)$, corresponding
to $c$ one can find $\epsilon_c>0$ such that
\begin{eqnarray}    \label{in:c0Fpositive}
    \inf\{  \|x^*\|:\ x^*\in\Coder F(\bar p,\cdot)(x,y)(y^*),\ 
       x\in\ball(\bar x,\epsilon_c),\ y\in\ball(\nullv,\epsilon_c)
               \backslash\{\nullv\},\ 
        (x,y)\in\grph F(\bar p,\cdot),\  \|y^*\|=1\}>c.
\end{eqnarray}
Now fix $(x,y)\in\ball(\bar x,\epsilon_c/2)\times\ball(\nullv,
\epsilon_c/2)$, with $0<\|y\|+\iota((x,y),\grph F(\bar p,\cdot))
\le\epsilon_c/2$. Notice that from the last inequality one can
deduce that $(x,y)\in\grph F(\bar p,\cdot)$. Moreover, being
$$
    \dispnull (x,y)=\|y\|+\iota((x,y),\grph F(\bar p,\cdot)),
$$
function $\dispnull :\X\times\Y\longrightarrow\R\cup\{\pm\infty\}$
is expressible as a sum of a Lipschitz continuous term and
an addend, which is l.s.c. in a neighbourhood of $(\bar x,\nullv)$
as indicator of the locally closed set $\grph F(\bar p,\cdot)$
(recall hypothesis (ii)).
Thus, it is possible to employ the fuzzy sum rule for estimating
the Fr\'echet subdifferential of $\dispnull$ at $(x,y)$.
Taken $\eta$ in such a way that
$$
   0<\eta<\min\left\{\frac{(1-c)^2c}{2},\frac{\epsilon_c}{2},
   \|y\|\right\},
$$
according to Lemma \ref{lem:fuzsumrule} for every $(x^*,y^*)\in
\fsubdif\dispnull (x,y)$ there exist:
$$
    (x_1,y_1),\, (x_2,y_2)\in\X\times\Y,\quad\hbox{with }
   x_i\in\ball(x,\eta),\ y_i\in\ball (y,\eta),\ i=1,\, 2,
$$
$$
    (x_1^*,y_1^*),\, (x_2^*,y_2^*)\in\X^*\times\Y^*,\hbox{ with }
    x_1^*=\nullv^*,\ \|y_1^*\|=1,\ \langle y_1^*,y_1\rangle=\|y_1\|,
    \quad\hbox{and } (x_2^*,-y_2^*)\in\Normal((x_2,y_2),
     \grph F(\bar p,\cdot)),
$$
such that
\begin{eqnarray}     \label{in:starinball}
    \|x^*-x_2^*\|\le\eta,\qquad \|y^*-y_1^*-y_2^*\|\le\eta,
\end{eqnarray}
and
\begin{eqnarray}     \label{in:strict0eta}
    0<| \|y\|-\|y_i\|+\iota((x_i,y_i),\grph F(\bar p,\cdot))|\le\eta,
    \quad i=1,\, 2.
\end{eqnarray}
Observe that by the last inequality it must be $(x_i,y_i)\in\grph
F(\bar p,\cdot)$, $i=1,\, 2$. Moreover, since $\eta<\epsilon_c/2$,
one has
$$
    d(x_i,\bar x)\le d(x_i,x)+d(x,\bar x)\le\frac{\epsilon_c}{2}+
   \frac{\epsilon_c}{2}=\epsilon_c,\quad i=1,\, 2,
$$
$$
    d(y_i,\nullv)\le d(y_i,y)+d(y,\nullv)\le\frac{\epsilon_c}{2}+
   \frac{\epsilon_c}{2}=\epsilon_c, \quad i=1,\, 2.
$$
Let us estimate the norm of an arbitrary $(x^*,y^*)\in\fsubdif\dispnull (x,y)$.
If $\|y^*\|\ge c$, then, by equipping the product space $\X^*
\times\Y^*$ with the norm
$$
   \|(x^*,y^*)\|=\max\left\{\frac{\|x^*\|}{(1-c)^2},\, \|y^*\|\right\},
$$
one finds
\begin{eqnarray}     \label{in:formpositive}
    \|(x^*,y^*)\|\ge \|y^*\|\ge c.
\end{eqnarray}
Otherwise, if $\|y^*\|<c$, by taking into account the second
inequality in (\ref{in:starinball}), one obtains
\begin{eqnarray}      \label{in:y*triang}
    \|y_2^*\|\ge \|y^*-y_1^*\|-\eta \ge \|y^*\|-\|y_1^*\|-\eta=1-\|y^*\|-\eta
   >1-c-\eta>1-c-(1-c)c=(1-c)^2,
\end{eqnarray}
as, being $0<c<1$, it is $\eta<\frac{(1-c)^2c}{2}<(1-c)c$.
The last inequalities enable one to set
$$
    y_0^*=\frac{y_2^*}{\|y_2^*\|},\qquad x_0^*=\frac{x_2^*}{\|y_2^*\|},
    \qquad x_0=x_2\qquad\hbox{and}\qquad y_0=y_2.
$$
Since $(x_2^*,-y_2^*)\in\Normal((x_2,y_2),\grph F(\bar p,\cdot))$,
one has $x_2^*\in\Coder F(\bar p,\cdot)(x_2,y_2)(y_2^*)$ and hence,
by the positive homogeneity of the Fr\'echet coderivative, it
results in
$$
    x_0^*\in\Coder F(\bar p,\cdot)(x_0,y_0)(y_0^*),\quad\hbox{with }
    \|y_0^*\|=1.
$$
Besides, on account of (\ref{in:strict0eta}) and of the choice of
$\eta$, it is
$$
   \|y_0\|\ge\|y\|-\eta>0
$$
and hence $y_0\ne\nullv$. With the above position, by virtue of the
first inequality in (\ref{in:starinball}), one finds
$$
    \|(x^*,y^*)\|\ge \frac{\|x^*\|}{(1-c)^2}\ge \frac{\|x_2^*\|-\eta}
    {(1-c)^2}\ge\frac{\|y_2^*\| \|x_0^*\|-\eta}{(1-c)^2}.
$$
Therefore, since $x_0\in\ball(\bar x,\epsilon_c)$, $y_0\in\ball
(\nullv,\epsilon_c)\backslash\{\nullv\}$ and $(x_0,y_0)\in\grph F
(\bar p,\cdot)$, in the light of inequality
(\ref{in:c0Fpositive}), by recalling (\ref{in:y*triang}) one obtains
$$
    \|(x^*,y^*)\|\ge \frac{1}{(1-c)^2}\left[(1-c)^2c-\frac{(1-c)^2c}{2}\right]
     =\frac{c}{2}.
$$
The latter estimation of $\|(x^*,y^*)\|$, along with (\ref{in:formpositive}),
leads to conclude that
$$
    \sostsl\dispnull (\bar x,\nullv)\ge\frac{c}{2},
$$
thereby completing the proof.
\end{proof}

\begin{remark}
The non degeneration condition appearing in Proposition \ref{pro:calmAspcod}
(hypothesis (iv)) requires to compute Fr\'echet coderivatives of
$F(\bar p,\cdot)$ at each point of a set like $[\ball (\bar x,\epsilon)\times
(\ball (\nullv,\epsilon)\backslash\{\nullv\})]\cap\grph F(\bar p,
\cdot)$. Such a drawback typically arises in many other regularity
conditions. Nevertheless, it is worthwhile mentioning the possibility
to pass to an one-point condition, by replacing the basic Fr\'echet
coderivatives with a single limiting coderivative construction, as
indicated for instance in \cite{ChKrYa11} (where more details can be found).
\end{remark}

In the case in which a nonnull but smooth base term enters
$(\mathcal{GE}_p)$, by strenghtening the assumption on the space
$(\Y,\|\cdot\|)$, a further calmness criterion may be formulated
in the following way.

\begin{proposition}
Let $f:P\times\X\longrightarrow\Y$ be a function and let $F:P\times\X
\longrightarrow 2^\Y$ be a set-valued mapping defining a problem
$(\mathcal{GE}_p)$, with solution mapping $G:P\longrightarrow 2^\X$,
and let $\bar x\in G(\bar p)$ and $\bar y=f(\bar p,\bar x)$. Suppose
that:

(i) $(\X,\|\cdot\|)$ is Asplund and $(\Y,\|\cdot\|)$ is a Fr\'echet
smooth renormable Banach space;

(ii) the graph of $F(\bar p,\cdot):\X\longrightarrow 2^\Y$ is closed
in a neighbourhood of $(\bar x,\bar y)$;

(iii) $f$ is calm with respect to $p$ at $\bar p$, uniformly in $x$ in a
neighbourhood of $\bar x$;

(iv) function $f(\bar p,\cdot)$ is Fr\'echet differentiable at
$\bar x$ and continuous in a neighbourhood of the same point;

(v) there exist $\delta>0$ and $\ell>0$ such that
$$
    F(p,x)\subseteq\ball(F(\bar p,x),\ell d(p,\bar p)),\quad
    \forall x\in\ball(\bar x,\delta),\quad\forall p\in\ball
    (\bar p,\delta);
$$

(vi) there exist positive $\gamma$ and $\epsilon_\gamma$ such
that
$$
     \forall (x,y)\in\ball (\bar x,\epsilon_\gamma)\times \ball (\bar x,
     \epsilon_\gamma), \hbox{ with } (x,y)\in\grph F(\bar p,\cdot),\ 
     f(\bar p,x)\ne y,\hbox{ and } \|f(\bar p,x)-y\|\le\epsilon_\gamma
$$
it  holds
\begin{eqnarray}   \label{in:regcondFrdiff}
    \inf_{\|y^*\|=1} \|\FrDer f(\bar p,\cdot)(x)^*y^*\|>(1+\gamma)
    \|\Coder F(\bar p,\cdot)(x,y)\|_+ +\gamma.
\end{eqnarray}
Then, $G$ is calm at $(\bar p,\bar x)$.
\end{proposition}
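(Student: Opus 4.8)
The plan is to derive this proposition from Theorem~\ref{thm:impcalm}, in the same spirit as Proposition~\ref{pro:calmAspcod}: all the hypotheses of that theorem transfer directly except its condition~(vi), so the entire content is to establish $\sostsl\disp(\bar x,\bar y)>0$. First I would record the routine matchings: completeness of $\X$ and $\Y$ is automatic (both are Banach), the local closedness of the graph is hypothesis~(ii), the uniform upper Lipschitz estimate on $F$ is hypothesis~(v), the continuity of $f(\bar p,\cdot)$ is part of~(iv), and the calmness of $f$ in $p$ uniform in $x$ is precisely hypothesis~(iii). Since $G$ maps into $\X$ and $\Y$ serves only as an auxiliary range space, I may replace the norm of $\Y$ by an equivalent Fr\'echet smooth one (available by~(i)); this does not affect the conclusion ``$G$ is calm at $(\bar p,\bar x)$'' while making $\|\cdot\|$ Fr\'echet differentiable off the origin. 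Note also that a Fr\'echet smooth renormable space is Asplund, so $\X\times\Y$ is Asplund.

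For the core estimate, observe that $\disp(\bar x,\bar y)=0$, so the side condition $0<\disp(x,y)\le\epsilon$ entering the strict outer slope forces $(x,y)\in\grph F(\bar p,\cdot)$ with $y\ne f(\bar p,x)$ and $\|f(\bar p,x)-y\|\le\epsilon$, which are exactly the points addressed in hypothesis~(vi). Because $\X\times\Y$ is Asplund and $\disp$ is l.s.c.\ near $(\bar x,\bar y)$ (the norm term is continuous and the indicator is l.s.c.\ by~(ii)), Remark~\ref{rem:stsllipder}(ii) lets me replace $\sostsl\disp(\bar x,\bar y)$ by $\soesubdifsl{\disp}(\bar x,\bar y)$, reducing matters to a lower bound on $\|(x^*,y^*)\|$ for $(x^*,y^*)\in\fsubdif\disp(x,y)$. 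Recalling $\disp(x,y)=\|f(\bar p,x)-y\|+\iota((x,y),\grph F(\bar p,\cdot))$, I write $\disp=\psi_1+\psi_2$ with $\psi_1(x,y)=\|f(\bar p,x)-y\|$. The smoothness of the renormed $\|\cdot\|$ together with the differentiability of $f(\bar p,\cdot)$ makes $\psi_1$ Fr\'echet differentiable at these points, with $\FrDer\psi_1(x,y)=(\FrDer f(\bar p,\cdot)(x)^*v^*,-v^*)$, where $v^*$ is the norm derivative at $f(\bar p,x)-y$ and $\|v^*\|=1$. The exact Fr\'echet sum rule for a differentiable summand then gives $\fsubdif\disp(x,y)=\FrDer\psi_1(x,y)+\Normal((x,y),\grph F(\bar p,\cdot))$, so every subgradient has the form $x^*=\FrDer f(\bar p,\cdot)(x)^*v^*+a^*$, $y^*=-v^*+b^*$ with $(a^*,b^*)\in\Normal((x,y),\grph F(\bar p,\cdot))$, i.e.\ $a^*\in\Coder F(\bar p,\cdot)(x,y)(-b^*)$ and hence $\|a^*\|\le\|\Coder F(\bar p,\cdot)(x,y)\|_+\,\|b^*\|$ whenever $b^*\ne\nullv^*$.

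The quantitative step is then elementary. Writing $\mu=\inf_{\|\eta^*\|=1}\|\FrDer f(\bar p,\cdot)(x)^*\eta^*\|$ and $\kappa=\|\Coder F(\bar p,\cdot)(x,y)\|_+$, one has $\|b^*\|=\|y^*+v^*\|\le\|y^*\|+1$, so $\|x^*\|\ge\mu-\kappa\|b^*\|\ge\mu-\kappa-\kappa\|y^*\|$. Hypothesis~(vi), read as $\mu>(1+\gamma)\kappa+\gamma$, yields $\|x^*\|>\gamma+\kappa(\gamma-\|y^*\|)$; distinguishing $\|y^*\|\le\gamma$ from $\|y^*\|>\gamma$ then gives $\max\{\|x^*\|,\|y^*\|\}>\gamma$ in both cases (taking $\gamma<1$ without loss of generality). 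As this bound is uniform over the admissible $(x,y)$, it produces $\soesubdifsl{\disp}(\bar x,\bar y)\ge\gamma>0$ up to the fixed factor relating the product norm to $\max\{\|x^*\|,\|y^*\|\}$, whence $\sostsl\disp(\bar x,\bar y)>0$, and Theorem~\ref{thm:impcalm} delivers the calmness of $G$.

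The step I expect to be delicate is the Fr\'echet differentiability of $\psi_1$ at nearby points $x\ne\bar x$: hypothesis~(iv) literally asserts differentiability of $f(\bar p,\cdot)$ only at $\bar x$, yet the very appearance of $\FrDer f(\bar p,\cdot)(x)$ in~(vi) presupposes it at the relevant $x$ near $\bar x$, so one must either read this into the hypotheses or restrict the argument to points of differentiability; without it the exact sum rule is unavailable and only a fuzzy-rule substitute (requiring local Lipschitzness, which continuity alone does not give) would remain. A minor companion point is the case $b^*=\nullv^*$, where the outer-norm bound $\|a^*\|\le\kappa\|b^*\|$ is vacuous: there $y^*=-v^*$, so $\|y^*\|=1>\gamma$ and the desired lower bound $\max\{\|x^*\|,\|y^*\|\}>\gamma$ holds directly, closing that gap cleanly.
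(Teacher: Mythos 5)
Your proposal is correct and follows essentially the same route as the paper's own proof: reduction to Theorem~\ref{thm:impcalm}, the subdifferential representation of $\sostsl\disp(\bar x,\bar y)$ from Remark~\ref{rem:stsllipder}(ii), the exact sum rule enabled by the Fr\'echet smooth renorm of $\Y$, and a case analysis on the dual elements to extract the lower bound $\gamma$ (the paper splits on $\|v_2^*\|$ versus $1+\gamma$ rather than on $\|y^*\|$ versus $\gamma$, but this is cosmetic). The delicate point you flag --- that hypothesis~(iv) literally asserts differentiability of $f(\bar p,\cdot)$ only at $\bar x$ while both (vi) and the sum-rule step require it at nearby points $x$ --- is a genuine imprecision that the paper's own proof silently shares, so your reading of it into the hypotheses matches the author's intent.
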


\begin{proof}
Recall that, according to the Ekeland-Lebourg theorem, any
Banach space admitting an equivalent Fr\'echet differentiable norm
is an Asplund space (see \cite{EkeLeb76}). Therefore, both
$(\X,\|\cdot\|)$ and $(\Y,\|\cdot\|)$ are Asplund spaces. It is then
possible to exploit once again the subdifferential representation of the
strict outer slope mentioned in Remark \ref{rem:stsllipder}(ii),
according to which one has
\begin{eqnarray*}
    \sostsl{\disp}(\bar x,\bar y) &=& \soesubdifsl{\disp}(\bar x,\bar y) \\
     &= & \liminf_{\epsilon\to 0^+} \{ \|(x^*,y^*)\|: (x^*,y^*)\in\fsubdif
              \disp (x,y),  \\
      & &  (x,y)\in \ball (\bar x,\epsilon)\times \ball (\bar x,\epsilon),
                0<\disp(x,y)\le\epsilon\}.
\end{eqnarray*}
Fix an arbitrary $(x,y)\in \ball(\bar x,\epsilon_\gamma)\times
\ball (\bar x,\epsilon_\gamma)$, with $0<\disp(x,y)\le\epsilon_\gamma$.
The finiteness of the value $\disp(x,y)$ implies that $(x,y)\in\grph F
(\bar p,\cdot)$, whereas its positivity entails that $f(\bar p,x)
\ne y$. Since the norm of $\Y$ can be assumed to be Fr\'echet
differentiable at $f(\bar p,x)-y$ by hypothesis (i), function
$(u,v)\mapsto \|f(\bar p,u)-v\|$ is Fr\'echet differentiable at
$(x,y)$. According to known Fr\'echet subdifferential calculus rules,
it holds
$$
   \fsubdif\disp (x,y)=\FrDer\|f(\bar p,\cdot)-\cdot\|(x,y)+
    \Normal((x,y),\grph F(\bar p,\cdot)).   
$$
Thus, if $(x^*,y^*)\in \fsubdif\disp (x,y)$, there exist
$$
    v^*_1\in\Y^*,\hbox{ with } \|v^*_1\|=1 \hbox{ and }
    \langle v^*_1,f(\bar p,x)-y\rangle=\|f(\bar p,x)-y\|,
$$
and
$$
    (u^*_2,v^*_2)\in\X^*\times\Y^*, \hbox{ with } u^*_2\in
    \Coder F(\bar p,\cdot)(x,y)(v^*_2),
$$
such that
$$
    (x^*,y^*)=(\FrDer f(\bar p,\cdot)(x)^*v^*_1+u^*_2,-v^*_1-v^*_2).
$$
One has now to verify that the norm of $(x^*,y^*)$ remains
greater than a positive constant. Similarly as for the proof of the previous
proposition, $\X^*\times\Y^*$ is assumed to be equipped with the
norm $\|(x^*,y^*)\|=\max\{\|x^*\|,\, \|y^*\|\}$. In the case $v^*_2=\nullv^*$,
one readily finds
$$
    \|(x^*,y^*)\|\ge\|v^*_1\|=1.
$$
In the case $v^*_2\ne\nullv^*$, let us distinguish two cases.
If it is $\|v^*_2\|>1+\gamma$, one obtains at once
$$
    \|(x^*,y^*)\|\ge\|v^*_2+v^*_1\|\ge\|v^*_2\|-\|v^*_1\|>\gamma.
$$
Otherwise, if it is $\|v^*_2\|\le 1+\gamma$, set
$$
    v_0^*=\frac{v^*_2}{\|v^*_2\|}\qquad\hbox{and}\qquad 
    u_0^*=\frac{u^*_2}{\|v^*_2\|},
$$
so that $u_0^*\in\Coder F(\bar p,\cdot)(x,y)(v_0^*)$.
Since it holds
$$
    \|u^*_2\|=\|u_0^*\|\|v^*_2\|\le\|\Coder F(\bar p,\cdot)(x,y)\|_+
    \|v^*_2\|\le(1+\gamma)\|\Coder F(\bar p,\cdot)(x,y)\|_+,
$$
in force of condition (\ref{in:regcondFrdiff}), it results in
\begin{eqnarray*}
    \|(x^*,y^*)\|& \ge& \|\FrDer f(\bar p,\cdot)(x)^*v^*_1+u^*_2\|\ge
    \|\FrDer f(\bar p,\cdot)(x)^*v^*_1\|-\|u^*_2\|  \\
    & \ge &
    \inf_{\|v^*\|=1}\|\FrDer f(\bar p,\cdot)(x)^*v^*\|-(1+\gamma)
    \|\Coder F(\bar p,\cdot)(x,y)\|_+  > \gamma.
\end{eqnarray*}
Thus, in any case, one can conclude that
\begin{eqnarray*}
    \inf \{ \|(x^*,y^*)\|: & & (x^*,y^*)\in\fsubdif\disp (x,y),\  (x,y)
     \in \ball (\bar x,\epsilon_\gamma)\times \ball (\bar x,
     \epsilon_\gamma), \\
     & & 0<\disp(x,y)\le\epsilon_\gamma\} >\min\{1,\gamma\}.
\end{eqnarray*}
By virtue of the criterion expressed by Theorem \ref{thm:impcalm},
what obtained allows to get the thesis.
\end{proof}


\section{Applications to parametric constrained optimization}     \label{sec:applparoptim}

The stability analysis of parametric optimization problems is
a well developed and active field of research, dealing with
qualitative and quantitative investigations about the behaviour
of the optimal solution set and of the optimal value function in the
presence of perturbations.
One of the early monograph to be mentioned, entirely devoted to a
systematic presentation of this topic, is \cite{Fiac83}.
It was followed by a good amount of works (see for example
\cite{BonSha00,KlaKum02,Yen97} and the references therein).

Some results concerning the value function associated with
a familly of parametric constrained optimization problems
have been already exposed in Section \ref{sec:varanapre}.
In the current section the analysis is focussed on the quantitative
stability behaviour of the solution set mapping. Indeed, 
the first result presented provides a sufficient
condition for the Lipschitz lower semicontinuity of such
set-valued mapping, which works in a purely metric space setting.

\begin{proposition}      \label{pro:Liplscpco}
With reference to a family $(\mathcal{P}_p)$ of perturbed
problems, let $\bar p\in P$ and $\bar x\in\Argmin(\bar p)$. Suppose
that:

(i) $(X,d)$ is complete;

(ii) for every $p\in P$ near $\bar p$, function $x\mapsto\varphi(p,x)$
is continuous in a neighbourhood of $\bar x$;

(iii) function $h$ is locally Lipschitz around $(\bar p,\bar x)$ with
respect to $x$, uniformly in $p$, i.e. there exist positive $\kappa$
and $\delta_\kappa$ such that
$$
   d(h(p,x_1),h(p,x_2))\le\kappa d(x_1,x_2),\quad\forall
   p\in B(\bar p,\delta_\kappa),\ \forall x_1,\, x_2\in B(\bar x,
   \delta_\kappa);
$$

(iv) functions $p\mapsto\varphi(p,\bar x)$, $p\mapsto h(p,\bar x)$
and $\valf$ are calm at $\bar p$;

(v) it holds
$$
    \sostslx{\varphi}(\bar p,\bar x)>\kappa.    
$$

\noindent Then $\bar p\in\inte(\dom\Argmin)$ and $\Argmin$ is Lipschitz
l.s.c. at $(\bar p,\bar x)$.
\end{proposition}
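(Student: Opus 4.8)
The plan is to recognize $\Argmin$ as the solution mapping of a suitably chosen parameterized generalized equation and then to invoke Theorem~\ref{thm:impLiplsc}. Concretely, I would work in the product space $\R\times Y$ equipped with the sum metric and set
$$
   f(p,x)=(\varphi(p,x)-\valf(p),\, h(p,x)), \qquad F(p,x)=(-\infty,0]\times C .
$$
Then $f(p,x)\in F(p,x)$ amounts to $\varphi(p,x)\le\valf(p)$ together with $h(p,x)\in C$, i.e. to $x\in\Argmin(p)$, so the associated variational system $G$ coincides with $\Argmin$. Since $\bar x\in\Argmin(\bar p)$ gives $\varphi(\bar p,\bar x)=\valf(\bar p)$, the reference value reads $\bar y=f(\bar p,\bar x)=(\nullv,h(\bar p,\bar x))$, and $\bar y\in F(p,\bar x)$ for every $p$ because $F$ is constant.

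First I would dispatch hypotheses (i)--(v) of Theorem~\ref{thm:impLiplsc}, which are essentially bookkeeping. Completeness (i) is assumption~(i). As $F$ is constant it is u.s.c., giving (ii), and $\bar y\in F(p,\bar x)$ makes $F(p,\bar x)\cap\ball(\bar y,l_F\,d(p,\bar p))\ne\varnothing$ hold with \emph{any} $l_F>0$, giving (iii). Continuity of $f(p,\cdot)$ in (iv) follows from continuity of $\varphi(p,\cdot)$ (assumption~(ii)) and Lipschitz continuity of $h(p,\cdot)$ (assumption~(iii)). The base calmness (v) follows from assumption~(iv): using $\varphi(\bar p,\bar x)=\valf(\bar p)$,
$$
   d(f(p,\bar x),f(\bar p,\bar x))\le
   |\varphi(p,\bar x)-\varphi(\bar p,\bar x)|+|\valf(p)-\valf(\bar p)|+d(h(p,\bar x),h(\bar p,\bar x)),
$$
and the calmness at $\bar p$ of $p\mapsto\varphi(p,\bar x)$, of $\valf$, and of $p\mapsto h(p,\bar x)$ bounds each summand by a multiple of $d(p,\bar p)$, producing some $l_f>0$.

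The heart of the proof, and the step I expect to be the main obstacle, is the nondegeneracy condition (vi), $\sostslx{\pdisp}(\bar p,\bar x)>0$. Here the displacement is
$$
   \pdisp(p,x)=(\varphi(p,x)-\valf(p))^+ + \dist(h(p,x),C),
$$
with $\pdisp(\bar p,\bar x)=0$, and my aim is the quantitative estimate $\sostslx{\pdisp}(\bar p,\bar x)\ge\sostslx{\varphi}(\bar p,\bar x)-\kappa$, which is positive by assumption~(v). The mechanism mirrors Remark~\ref{rem:stsllipder}(i): the addend $\dist(h(p,\cdot),C)$ is $\kappa$-Lipschitz in $x$ (since $h(p,\cdot)$ is $\kappa$-Lipschitz and $\dist(\cdot,C)$ is nonexpansive), so it can depress the partial slope of $\pdisp$ by at most $\kappa$; and at points where $\varphi(p,x)>\valf(p)$ the positive-part term is locally active, whence its partial slope equals $\stslx{\varphi}(p,x)$. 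Combining the two yields $\stslx{\pdisp}(p,x)\ge\stslx{\varphi}(p,x)-\kappa$ at such points. The delicate region is the complementary one: points $(p,x)$ arbitrarily near $(\bar p,\bar x)$ that are infeasible yet satisfy $\varphi(p,x)<\valf(p)$, where the excess term is inactive, decreasing $\varphi$ no longer decreases $\pdisp$, and the pointwise bound can break down. To handle these I would lean on the limiting structure of the strict outer slope (the $\epsilon\to0^+$ infimum over a shrinking neighbourhood intersected with the thin super-level slab $0<\pdisp\le\epsilon$) together with the near-feasibility of $\bar x$: by the calmness of $p\mapsto h(p,\bar x)$ and $h(\bar p,\bar x)\in C$ one has $\dist(h(p,\bar x),C)=O(d(p,\bar p))$, and this forbids the persistence, arbitrarily close to $\bar x$, of strict local minimizers of the infeasibility measure carrying a comparatively large positive value. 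Consequently such slope-depressing points should not survive the $\epsilon\to0^+$ limit, and the bound $\sostslx{\pdisp}(\bar p,\bar x)\ge\sostslx{\varphi}(\bar p,\bar x)-\kappa>0$ is preserved.

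With (i)--(vi) in force, Theorem~\ref{thm:impLiplsc} delivers the Lipschitz lower semicontinuity of $G=\Argmin$ at $(\bar p,\bar x)$. Finally, the guaranteed nonemptiness of $\Argmin(p)\cap\ball(\bar x,l\,d(p,\bar p))$ for all $p$ in a ball about $\bar p$ (cf. Remark~\ref{rem:Liplsccalmgen}(i) and the remark following Theorem~\ref{thm:impLiplsc}) shows $\Argmin(p)\ne\varnothing$ near $\bar p$, that is $\bar p\in\inte(\dom\Argmin)$, completing the argument.
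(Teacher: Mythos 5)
Your reduction to Theorem \ref{thm:impLiplsc} is the right strategy, and your verification of hypotheses (i)--(v) of that theorem is essentially the same bookkeeping as in the paper. The difficulty is exactly where you locate it, in hypothesis (vi), but your choice of field creates a gap there that your argument does not close. With $F(p,x)=(-\infty,0]\times C$ the displacement is $\pdisp(p,x)=(\varphi(p,x)-\valf(p))^+ +\dist(h(p,x),C)$, and at a point with $\varphi(p,x)<\valf(p)$ strictly and $h(p,x)\notin C$, continuity of $\varphi(p,\cdot)$ makes the positive-part term vanish on a whole neighbourhood of $x$, so that locally $\pdisp(p,\cdot)=\dist(h(p,\cdot),C)$ and $\stslx{\pdisp}(p,x)$ is just the slope of the infeasibility measure. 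Nothing in hypotheses (i)--(v) bounds that slope away from zero: assumption (v) concerns only $\varphi$, and the calmness of $p\mapsto h(p,\bar x)$ controls the infeasibility at the single point $\bar x$, not the existence of near-critical points of $\dist(h(p,\cdot),C)$ with small positive values at other $x$ arbitrarily close to $\bar x$. Such points can occur at every scale (think of a Lipschitz infeasibility profile in one variable with local minima of positive value accumulating at $\bar x$, with Lipschitz constant $\kappa$ small and $\varphi$ decreasing through $\bar x$ so that $\varphi(p,x)<\valf(p)$ on one side), so they are not filtered out by the $\epsilon\to 0^+$ limit in the definition of $\sostslx{\pdisp}$. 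Your closing sentence for the ``delicate region'' is therefore an assertion, not a proof, and I do not believe it can be repaired from the stated hypotheses.

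The paper avoids the problem by choosing the \emph{equality} reformulation $F(p,x)=\{0\}\times C$, which encodes $\varphi(p,x)=\valf(p)$ rather than $\varphi(p,x)\le\valf(p)$ (the two define the same solution mapping, since feasible points automatically satisfy $\varphi(p,x)\ge\valf(p)$). With the sum metric this gives $\pdisp(p,x)=\varphi(p,x)-\valf(p)+\dist(h(p,x),C)$ on the relevant region, i.e.\ exactly $\varphi(p,\cdot)$ plus a constant plus the $\kappa$-Lipschitz function $x\mapsto\dist(h(p,x),C)$; Remark \ref{rem:stsllipder}(i) then yields $\sostslx{\pdisp}(\bar p,\bar x)\ge\sostslx{\varphi}(\bar p,\bar x)-\kappa>0$ in one line, with no dead zone where the objective term is switched off. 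If you replace your field by $\{0\}\times C$ and invoke that remark, the rest of your argument goes through as written.
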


\begin{proof}
The first part of the thesis is obviously a consequence of the second one.
The latter is readily proved by applying Theorem \ref{thm:impLiplsc}
to the generalized equation $(\mathcal{GE}_p)$ defined by
a base $f:P\times X\longrightarrow (\R\cup\{\pm\infty\})\times Y$
given by
$$
    f(p,x)=(\varphi(p,x)-\valf(p),h(p,x))
$$
and by the constant field $F:P\times X\longrightarrow 2^{\R\cup
\{\pm\infty\}}\times Y$
$$
    F(p,x)=\{0\}\times C.
$$
In fact, the associated variational system coincides with the
set-valued mapping $\Argmin:P\longrightarrow 2^X$. Thus,
one has to verify that, in the case under consideration,  all
hypotheses of Theorem \ref{thm:impLiplsc} are fulfilled.
Since $F$ is a constant set-valued mapping and
$$
    f(\bar p,\bar x)=(0,h(\bar p,\bar x))\in F(p,\bar x)=\{0\}\times C,
    \quad\forall p\in P,
$$
hypotheses (ii) and (iii) are evidently satisfied. Next, hypothesis
(iv) is in force because of the continuity of functions $x\mapsto
\varphi(p,x)$ and of (local Lipschitz) continuity of functions
$x\mapsto h(p,x)$, around $\bar x$, with $p$ in a neighbourhood
of $\bar p$.
Again, calmness of
$f(\cdot,\bar x)$ at $\bar p$ (hypothesis (v)) follows from calmness
of $\varphi(\cdot,\bar x)$, $\valf$, and $h(\cdot,\bar x)$ at the
same point. It remains to show that also hypothesis (vi) is
valid. To this aim, observe that, if equipping the product space
$(\R\cup\{\pm\infty\})\times Y$ with the sum metric, one
obtains
\begin{eqnarray}      \label{eq:pdispappl}
    \pdisp(p,x)=\dist((\varphi(p,x)-\valf(p),h(p,x)),\{0\}\times C)
    =\varphi(p,x)-\valf(p)+\dist(h(p,x),C).
\end{eqnarray}
Since function $x\mapsto h(p,x)$ is locally Lipschitz around
$(\bar p,\bar x)$ with constant $\kappa>0$ and function $y\mapsto
\dist(y,C)$ is Lipschitz continuous with constant $1$ all over $Y$,
their composition $x\mapsto \dist (h(p,x),C)$ turns out to be
locally Lipschitz with constant $\kappa$ around $\bar x$,
uniformly in $p$ near $\bar p$.
By recalling what has been observed in Remark \ref{rem:stsllipder},
being $\sostslx{\varphi}(\bar p,\bar x)>\kappa$, one obtains from
(\ref{eq:pdispappl})
$$
    \sostslx{\pdisp}(\bar p,\bar x)\ge\sostslx{\varphi}(\bar p,\bar x)
   -\kappa>0.
$$
This completes the proof.
\end{proof}

Relying on the nice properties enjoyed by the Fr\'echet subdifferential
calculus in Asplund spaces, a stability result for perturbed nonsmooth
optimization problems can be formulated as follows.

\begin{corollary}
With reference to a family $(\mathcal{P}_p)$ of perturbed
problems, let $\bar p\in P$ and $\bar x\in\Argmin(\bar p)$. Suppose
that $(\X,\|\cdot\|)$ is Asplund and that
hypotheses (ii), (iii) and (iv) of Proposition \ref{pro:Liplscpco}
are in force. If
$$
    \lim_{\epsilon\to 0^+}\ \inf\{ \|x^*\|:\ 
    (p,x)\in\ball(\bar p,\epsilon)\times\ball(\bar x,\epsilon),\ 
     \varphi(\bar p,\bar x)<\varphi(p,x)\le\varphi(\bar p,\bar x)+
    \epsilon,\ x^*\in\widehat\partial_x\varphi(p,x)\}>\kappa,
$$
where $\kappa$ is the Lipschitz constant as in hypothesis (iii),
then $\bar p\in\inte(\dom\Argmin)$ and $\Argmin$ is Lipschitz
l.s.c. at $(\bar p,\bar x)$.
\end{corollary}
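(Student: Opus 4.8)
The plan is to obtain the assertion as a direct application of Proposition \ref{pro:Liplscpco}. Since $\X$ is Asplund it is in particular a Banach space, hence metrically complete, so hypothesis (i) of that Proposition holds with $X=\X$; hypotheses (ii), (iii) and (iv) are assumed outright. Thus the whole matter reduces to verifying the nondegeneracy requirement (v), namely $\sostslx{\varphi}(\bar p,\bar x)>\kappa$. Writing $S$ for the subdifferential quantity appearing in the statement, what I must show is the lower estimate $\sostslx{\varphi}(\bar p,\bar x)\ge S$; since $S>\kappa$ by hypothesis, this yields (v) at once, and Proposition \ref{pro:Liplscpco} then delivers both $\bar p\in\inte(\dom\Argmin)$ and the Lipschitz lower semicontinuity of $\Argmin$ at $(\bar p,\bar x)$.

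The estimate $\sostslx{\varphi}(\bar p,\bar x)\ge S$ is precisely a partial, parameter-uniform incarnation of the exact slope representation recorded in Remark \ref{rem:stsllipder}(ii). I would isolate the two inequalities. The opposite inequality $\sostslx{\varphi}(\bar p,\bar x)\le S$ is elementary and not needed here: for each admissible $(p,x)$ and each $x^*\in\widehat\partial_x\varphi(p,x)$ the Fr\'echet subgradient inequality gives $\stslx{\varphi}(p,x)\le\|x^*\|$, whence passing to infima over the band and letting $\epsilon\to 0^+$ yields $\sostslx{\varphi}(\bar p,\bar x)\le S$. It is the reverse bound that carries the content and genuinely uses Asplundity, since at a single point the strong slope can be strictly smaller than the norm of the least Fr\'echet subgradient.

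For the lower bound I would run, fibrewise in $p$, the variational argument underlying Remark \ref{rem:stsllipder}(ii). Fix $c<S$. By hypothesis (ii) of Proposition \ref{pro:Liplscpco}, for every $p$ near $\bar p$ the function $\varphi(p,\cdot)$ is continuous, \emph{a fortiori} l.s.c., on a neighbourhood of $\bar x$ in the Asplund space $\X$. Arguing by contradiction, suppose that along some sequence $\epsilon\to 0^+$ there are band points $(p,x)$, with $x\in\ball(\bar x,\epsilon)$, $p\in\ball(\bar p,\epsilon)$ and $\varphi(\bar p,\bar x)<\varphi(p,x)\le\varphi(\bar p,\bar x)+\epsilon$, at which $\stslx{\varphi}(p,x)<c$. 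Applying the Ekeland variational principle to $\varphi(p,\cdot)$ together with the fuzzy sum rule (Lemma \ref{lem:fuzsumrule}) produces a nearby state $x'$, with $\varphi(p,x')$ close to $\varphi(p,x)$, and a partial Fr\'echet subgradient $x^*\in\widehat\partial_x\varphi(p,x')$ of norm smaller than $c$. The crux is to certify that $(p,x')$ still lies in the band defining $S$: since $x'\to x$ and $\varphi(p,x')\to\varphi(p,x)$, one keeps $x'\in\ball(\bar x,\epsilon')$ and $\varphi(\bar p,\bar x)<\varphi(p,x')\le\varphi(\bar p,\bar x)+\epsilon'$ for a slightly enlarged radius $\epsilon'$, using that the reference level is the fixed value $\varphi(\bar p,\bar x)$. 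The existence of such an $x^*$ with $\|x^*\|<c$ then contradicts $S>c$, proving $\sostslx{\varphi}(\bar p,\bar x)\ge c$ for every $c<S$, that is $\sostslx{\varphi}(\bar p,\bar x)\ge S$.

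The main obstacle I anticipate is exactly this bookkeeping in the reverse inequality: transporting the single-function statement of Remark \ref{rem:stsllipder}(ii) to the partial slope $\sostslx{\varphi}$ requires performing the Ekeland and fuzzy-sum-rule extraction uniformly in the parameter $p$ and, above all, ensuring the perturbed state $x'$ does not escape the joint super-level band, which here is anchored at the global reference value $\varphi(\bar p,\bar x)$ rather than at the fibre value $\varphi(p,\bar x)$. Once this uniformity is secured, no further work is needed: the verification of (v) is complete, and the conclusion follows verbatim from Proposition \ref{pro:Liplscpco}.
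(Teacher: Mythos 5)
Your proposal is correct and follows essentially the same route as the paper: the paper's entire proof consists of verifying hypothesis (v) of Proposition \ref{pro:Liplscpco} by bounding $\sostslx{\varphi}(\bar p,\bar x)$ from below by the subdifferential quantity, with a bare citation of Remark \ref{rem:stsllipder}(ii). You in fact supply more detail than the paper does, spelling out the Ekeland/fuzzy-sum-rule derivation of the hard inequality in its partial, parameter-dependent form and correctly flagging the one delicate point, namely keeping the perturbed state inside the super-level band anchored at the fixed value $\varphi(\bar p,\bar x)$.
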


\begin{proof}
The thesis follows immediately after having estimated $\sostslx{\varphi}
(\bar p,\bar x)$ in terms of the partial Fr\'echet subdifferentials
of $\varphi$ near $(\bar p,\bar x)$, as indicated in Remark
\ref{rem:stsllipder}(ii).
\end{proof}

On  the base of simple representations of the strict outer slope
enabled by the presence of differentiability, for optimization
problems with smooth data it is possible to reformulate
the above criterion in general Banach spaces.

\begin{corollary}
With reference to a family $(\mathcal{P}_p)$ of perturbed
problems, let $\bar p\in P$ and $\bar x\in\Argmin(\bar p)$. 
Suppose that:

(i) $(\X,\|\cdot\|)$ is a Banach space;

(ii) for every $p$ near $\bar p$, each function $x\mapsto\varphi(p,x)$
is strictly differentiable at $\bar x$, with derivative
$\StDer_x\varphi(p,\bar x)$; 

(iii) there exists $\delta>0$ such that each function $x\mapsto h
(p,x)$ is G\^ateaux differentiable on $\ball(\bar x,\delta)$, for
every $p$ near $\bar p$, with derivative $\GDer_xh(p,x)$, and it holds
$$
    \|\StDer_x\varphi(\bar p,\bar x)\|>\sup_{(p,x)\in\ball(\bar p,\delta)
    \times \ball(\bar x,\delta)} \|\GDer_xh(p,x)\|_{\mathcal{L}};
$$

(iv) functions $p\mapsto\varphi(p,\bar x)$, $p\mapsto h(p,\bar x)$
and $\valf$ are calm at $\bar p$.

\noindent Then $\bar p\in\inte(\dom\Argmin)$ and $\Argmin$ is Lipschitz
l.s.c. at $(\bar p,\bar x)$.
\end{corollary}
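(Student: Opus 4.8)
The plan is to cast $(\mathcal{P}_p)$ as the generalized equation $(\mathcal{GE}_p)$ used in the proof of Proposition \ref{pro:Liplscpco}, with base $f(p,x)=(\varphi(p,x)-\valf(p),h(p,x))$ and constant field $F(p,x)=\{0\}\times C$, so that the associated variational system is $\Argmin$, and then to check that the five hypotheses of Proposition \ref{pro:Liplscpco} hold. Completeness (hypothesis (i)) is immediate since $(\X,\|\cdot\|)$ is Banach. Strict differentiability of $\varphi(p,\cdot)$ at $\bar x$ yields, for each $p$ near $\bar p$, local Lipschitz continuity of $\varphi(p,\cdot)$ around $\bar x$ (with constant close to $\|\StDer_x\varphi(p,\bar x)\|$), hence continuity, which is hypothesis (ii); and the calmness of $p\mapsto\varphi(p,\bar x)$, $p\mapsto h(p,\bar x)$ and $\valf$ is exactly hypothesis (iv).

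The first substantive step is to produce the Lipschitz constant of $h$ in $x$ demanded by hypothesis (iii). I would set $\kappa=\sup_{(p,x)\in\ball(\bar p,\delta)\times\ball(\bar x,\delta)}\|\GDer_xh(p,x)\|_{\mathcal{L}}$ and invoke the mean value inequality on the convex set $\ball(\bar x,\delta)$: given $x_1,x_2\in\ball(\bar x,\delta)$, choose by Hahn--Banach a functional $y^*\in\Y^*$ with $\|y^*\|=1$ and $\langle y^*,h(p,x_1)-h(p,x_2)\rangle=\|h(p,x_1)-h(p,x_2)\|$, and differentiate the scalar map $t\mapsto\langle y^*,h(p,x_2+t(x_1-x_2))\rangle$ along the segment to obtain $\|h(p,x_1)-h(p,x_2)\|\le\kappa\,\|x_1-x_2\|$, uniformly in $p\in\ball(\bar p,\delta)$. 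This secures hypothesis (iii) of Proposition \ref{pro:Liplscpco}.

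It remains to verify hypothesis (v), that is $\sostslx{\varphi}(\bar p,\bar x)>\kappa$, and this is where the differentiability representation of the strict outer slope enters. I would prove the parametric analogue of Remark \ref{rem:stsllipder}(iii): from the strict differentiability of $\varphi(p,\cdot)$ at $\bar x$, the quotient $[\varphi(p,x)-\varphi(p,z)]/\|x-z\|$ is, for $x,z$ sufficiently near $\bar x$, uniformly close to $\langle\StDer_x\varphi(p,\bar x),x-z\rangle/\|x-z\|$, whence $\stslx{\varphi}(p,x)\to\|\StDer_x\varphi(p,\bar x)\|$ as $x\to\bar x$. Passing to the $\liminf$ over the admissible pairs $(p,x)$ in the definition of $\sostslx{\varphi}(\bar p,\bar x)$ then gives the lower bound $\sostslx{\varphi}(\bar p,\bar x)\ge\|\StDer_x\varphi(\bar p,\bar x)\|$. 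Since hypothesis (iii) of the corollary reads $\|\StDer_x\varphi(\bar p,\bar x)\|>\kappa$, I conclude $\sostslx{\varphi}(\bar p,\bar x)>\kappa$, and Proposition \ref{pro:Liplscpco} furnishes both $\bar p\in\inte(\dom\Argmin)$ and the Lipschitz lower semicontinuity of $\Argmin$ at $(\bar p,\bar x)$.

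I expect the genuine difficulty to lie in this last identification, since the partial strict outer slope infimizes over parameters $p\to\bar p$ as well as over states $x\to\bar x$. Controlling $\stslx{\varphi}(p,x)$ from below uniformly in $p$ requires that the strict differentiability at $\bar x$ behave regularly as $p$ varies, so that $\liminf_{p\to\bar p}\|\StDer_x\varphi(p,\bar x)\|\ge\|\StDer_x\varphi(\bar p,\bar x)\|$. I would address this by combining the uniformity built into strict differentiability with the level-set localization $\varphi(\bar p,\bar x)<\varphi(p,x)\le\varphi(\bar p,\bar x)+\epsilon$, which together with $x\to\bar x$ forces $(p,x)\to(\bar p,\bar x)$; the remaining manipulations are the routine bookkeeping already performed in the proof of Proposition \ref{pro:Liplscpco}.
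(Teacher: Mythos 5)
Your proposal is correct and follows essentially the same route as the paper: verify the hypotheses of Proposition \ref{pro:Liplscpco}, obtain the uniform Lipschitz constant $\kappa=\sup\|\GDer_xh(p,x)\|_{\mathcal{L}}$ for $h(p,\cdot)$ on the convex ball via the mean value theorem, and pass from strict differentiability to $\sostslx{\varphi}(\bar p,\bar x)>\kappa$. The only divergence is one of scruple: where you flag the need to control $\stslx{\varphi}(p,x)$ from below uniformly as $p\to\bar p$, the paper simply invokes Remark \ref{rem:stsllipder}(iii) to assert $\sostslx{\varphi}(\bar p,\bar x)=\|\StDer_x\varphi(\bar p,\bar x)\|$ without further argument, so your extra care at that point, while legitimate, goes beyond what the published proof itself supplies.
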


\begin{proof}
Since function $h(p,\cdot)$ is G\^ateaux differentiable on the convex
subset $\ball(\bar x,\delta)$, for each $p$
near $\bar p$, then as a consequence of the mean value theorem one has
$$
    \|h(p,x_1)-h(p,x_2)\|\le\sup_ {x\in \ball(\bar x,\delta)}
    \|\GDer_xh(p,x)\|_\mathcal{L}\|x_1-x_2\|,\quad\forall
    x_1,\, x_2\in \ball(\bar x,\delta).
$$
In other words, in the light of hypothesis (iii) $h$ is locally
Lipschitz with respect to $x$, uniformly in $p$, around $(\bar p,
\bar x)$, having a Lipschitz constant not greater than $\displaystyle
\sup_{(p,x)\in\ball(\bar p,\delta)\times \ball(\bar x,\delta)} \|\GDer_x
h(p,x)\|_{\mathcal{L}}$. By hypothesis $(ii)$, function $\varphi
(p,\cdot)$ is continuous in a neighbourhood of $\bar x$, for every
$p$ near $\bar p$. Moreover, as observed in Remark \ref{rem:stsllipder}(iii),
it holds $\sostslx{\varphi}(\bar p,\bar x)=\|\StDer_x\varphi(\bar p,\bar x)\|$.
Thus, being all its hypotheses fulfilled, it
remains to apply Proposition \ref{pro:Liplscpco}.
\end{proof}


\vskip.5cm

\vskip1cm   

\bibliographystyle{amsplain}

\end{document}